\newcommand{\LO}{\mathrm{LO}}
\DeclareMathOperator{\Aut}{Aut}
\DeclareMathOperator{\BS}{BS}
\DeclareMathOperator{\Homeo}{Homeo}
\DeclareMathOperator{\Orb}{Orb}
\DeclareMathOperator{\Stab}{Stab}
\newenvironment{enumerate-(a)}{\begin{enumerate}[label={\upshape (\alph*)}, leftmargin=2pc]}{\end{enumerate}}
\newenvironment{enumerate-(a)-r}{\begin{enumerate}[label={\upshape (\alph*)}, leftmargin=2pc,resume]}{\end{enumerate}}
\newenvironment{enumerate-(a)-5}{\begin{enumerate}[label={\upshape (\alph*)}, leftmargin=2pc,start=5]}{\end{enumerate}}
\newenvironment{enumerate-(A)}{\begin{enumerate}[label={\upshape (\Alph*)}, leftmargin=2pc]}{\end{enumerate}}
\newenvironment{enumerate-(A)-r}{\begin{enumerate}[label={\upshape (\Alph*)}, leftmargin=2pc,resume]}{\end{enumerate}}
\newenvironment{enumerate-(i)}{\begin{enumerate}[label={\upshape (\roman*)}, leftmargin=2pc]}{\end{enumerate}}
\newenvironment{enumerate-(i)-r}{\begin{enumerate}[label={\upshape (\roman*)}, leftmargin=2pc,resume]}{\end{enumerate}}
\newenvironment{enumerate-(I)}{\begin{enumerate}[label={\upshape (\Roman*)}, leftmargin=2pc]}{\end{enumerate}}
\newenvironment{enumerate-(I)-r}{\begin{enumerate}[label={\upshape (\Roman*)}, leftmargin=2pc,resume]}{\end{enumerate}}
\newenvironment{enumerate-(1)}{\begin{enumerate}[label={\upshape (\arabic*)}, leftmargin=2pc]}{\end{enumerate}}
\newenvironment{enumerate-(1)-r}{\begin{enumerate}[label={\upshape (\arabic*)}, leftmargin=2pc,resume]}{\end{enumerate}}
\newtheorem{theorem}{Theorem}[section]
\newtheorem{lemma}[theorem]{Lemma}
\newtheorem{corollary}[theorem]{Corollary}
\newtheorem{proposition}[theorem]{Proposition}
\newtheorem{question}[theorem]{Question}
\theoremstyle{definition}
\theoremstyle{remark}
\newtheorem{remark}[theorem]{Remark}
\providecommand{\customgenericname}{}
\newcommand{\newcustomtheorem}[2]{%
  \newenvironment{#1}[1]
  {%
   \renewcommand\customgenericname{#2}%
   \renewcommand\theinnercustomgeneric{##1}%
   \innercustomgeneric
  }
  {\endinnercustomgeneric}
}
\begin{document}

\title[Condensation and left-orderable groups]{Condensation and left-orderable groups}

\date{}
\author[F.~Calderoni]{Filippo Calderoni}

\address{Department of Mathematics, Rutgers University, Hill Center for the Mathematical Sciences, 110 Frelinghuysen Rd., Piscataway, NJ 08854-8019} \email{filippo.calderoni@rutgers.edu}

\author[A.~Clay]{Adam Clay}
\address{Department of Mathematics, 420 Machray Hall, University of Manitoba, Winnipeg, MB, R3T 2N2, Canada} \email{Adam.Clay@umanitoba.ca}

 \subjclass[2020]{Primary: 03E15, 06F15, 20F60.}

 \thanks{We are grateful to the anonymous referee for pointing out Proposition~\ref{prop : convex} and sharing the proof. The first author would like to thank Justin Moore for asking about the Borel complexity of the conjugacy action of Thompson's group \(F\) on its space of left-orderings. This prompted the investigation of Section~\ref{sec : Thompson}. Calderoni's research was partially supported by the NSF grant DMS--2348819. Clay's research was partially supported by NSERC grant RGPIN-2020-05343.}

\begin{abstract}
We discuss condensed left-orderings and develop new techniques to show that the conjugacy relation on the space of left-orderings is not smooth.
These techniques apply to the solvable Baumslag Solitar groups \(\BS(1,n)\) and to Thompson's group \(F\).
\end{abstract}

\maketitle


\section{Introduction}

The study of definable quotients of Polish spaces is one of the main themes in modern descriptive set theory, with the primary goal being to understand the Borel structure of Polish spaces modulo analytic equivalence relations. A fundamental question is whether the quotient space, equipped with the quotient Borel structure, is standard.  The first trace of such an analysis dates back to the work of Glimm~\cite{Gli61} and Effros~\cite{Eff}.

If \(G\) is a countable group acting continuously on the Polish space \(X\), we denote by \(X/G\) the space of orbits, endowed with the quotient Borel structure. In this case \(X/G\) is standard if and only if the orbit equivalence relation on \(X\) induced by the \(G\)-action is \emph{smooth}. That is, if and only if there is a Borel map \(\theta\colon X\to \mathbb{R}\) such that \(x_1, x_2\) lie in the same \(G\)-orbit if and only if \(\theta(x_1) = \theta(x_2)\).  It is owing to this characterization that for the remainder of the manuscript, we assume that all groups are countable unless otherwise indicated.

Following this trend, a question posed by Deroin, Navas, and Rivas~\cite{DerNavRiv} raised the problem of whether the space of left-orderings $\LO(G)$ of a left-orderable group \(G\) modulo the conjugacy \(G\)-action is always standard, or equivalently, whether or not the orbit equivalence relation is always smooth. Using descriptive set theory to demonstrate non-smoothness, the authors of this manuscript showed that \(\LO(G)/G\) is not a standard Borel space in many cases; for example, when \(G\) is a  non-abelian free group or a free product of left-orderable groups~\cite{CalCla22, CalCla23}.

Denote the equivalence relation induced by the conjugacy action of a  group $G$ on its space of left-orderings by $E_\mathsf{lo}(G)$, and let $\BS(1,n)$ denote the Baumslag-Solitar group.  In this manuscript, we show:

\begin{theorem}
\label{thm : BS}
\begin{enumerate-(1)}
    \item 
    \label{item : main 1}
    For all \(n\geq 2\), \(E_\mathsf{lo}(\BS(1, n))\) is not smooth.
    \item 
    \label{item : main 2}
    The conjugacy equivalence relation \(E_\mathsf{lo}(\BS(1, 2))\) is Borel bi-reducible with \(E_0\).
\end{enumerate-(1)}
\end{theorem}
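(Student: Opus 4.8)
The plan is to route everything through the faithful action of $\BS(1,n)=\Z[1/n]\rtimes\Z$ on $\R$ by orientation-preserving affine maps, where $a$ acts as $x\mapsto x+1$ and $b$ as $x\mapsto nx$, so that $bab^{-1}=a^{n}$. For an irrational basepoint $x_{0}$ the orbit map $g\mapsto g(x_{0})$ is injective, so $g<_{x_{0}}h \Leftrightarrow g(x_{0})<h(x_{0})$ defines a left-ordering of $\BS(1,n)$; a one-line computation shows that conjugating $<_{x_{0}}$ by $f$ yields $<_{f(x_{0})}$, so two such orderings are conjugate precisely when their basepoints share a $\BS(1,n)$-orbit in $\R$. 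First I would make this correspondence precise, locate the condensed orderings inside this ``dynamical'' family, and observe that it reduces the analysis of $E_{\mathsf{lo}}(\BS(1,n))$ to the orbit equivalence relation of $\BS(1,n)\curvearrowright\R$.

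For part~\ref{item : main 1} I would build a continuous reduction $E_{0}\le_{c}E_{\mathsf{lo}}(\BS(1,n))$. The normal subgroup $\Z[1/n]$ acts by translations, and for irrationals in a fixed small interval the relation ``differ by an element of $\Z[1/n]$'' is exactly eventual agreement of base-$n$ expansions, i.e.\ $E_{0}$. Concretely I would map $\alpha\in2^{\N}$ to a basepoint $x(\alpha)$ encoding $\alpha$ along a sparse set of base-$n$ digits, shifted by a fixed irrational so that each $x(\alpha)$ has trivial stabilizer; altering $\alpha$ on a finite set moves $x(\alpha)$ by an element of $\Z[1/n]$, and the sparseness is arranged so that the dilations $x\mapsto n^{m}x$ with $m\ne0$ create no extra coincidences among the $x(\alpha)$. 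Then $\alpha\mathrel{E_{0}}\beta$ iff $<_{x(\alpha)}$ and $<_{x(\beta)}$ are conjugate, and non-smoothness follows from the Glimm--Effros dichotomy of Harrington--Kechris--Louveau.

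Part~\ref{item : main 2} asks for $E_{0}\le_{B}E_{\mathsf{lo}}(\BS(1,2))\le_{B}E_{0}$. The first inequality is the reduction just described. For the second I would show that $E_{\mathsf{lo}}(\BS(1,2))$ is hyperfinite and then invoke the dichotomy: a non-smooth hyperfinite Borel equivalence relation is Borel bi-reducible with $E_{0}$, and non-smoothness is already supplied by part~\ref{item : main 1}. To obtain hyperfiniteness I would use that the conjugacy action is an action of $\BS(1,2)=\Z[1/2]\rtimes\Z$: its abelian normal subgroup $\Z[1/2]$ induces a hyperfinite relation by the theorem of Gao and Jackson on countable abelian group actions, and the whole relation is obtained from this one by adjoining the single Borel automorphism ``conjugate by $b$'', which permutes the classes of the $\Z[1/2]$-relation.

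The hard part will be this last step, namely that adjoining the doubling automorphism preserves hyperfiniteness---that is, that hyperfiniteness survives this $\Z$-extension. The measure-theoretic analogue is Ornstein--Weiss amenability, but the purely Borel statement is delicate, since Borel hyperfiniteness of general amenable actions is open. The reason I expect $n=2$ to be tractable is the explicit dyadic structure: on the dynamical family the $\Z[1/2]$-relation is the base-$2$ tail relation and $b$ acts as the shift, so the combined relation is of odometer type, whose hyperfiniteness can be witnessed directly by a marker construction along the $\Z$-coordinate layered over the finite approximants to the tail relation. Assembling these pieces yields that $E_{\mathsf{lo}}(\BS(1,2))$ is Borel bi-reducible with $E_{0}$.
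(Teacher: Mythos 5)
Your part~\ref{item : main 1} is correct but takes a genuinely different route from the paper. The paper never codes \(E_0\) directly: it shows that every Smirnov ordering \(P_\alpha\) lies in the free part \(\LO((\BS(1,n)))\) of the conjugacy action (using injectivity of \(\alpha\mapsto P_\alpha\) together with the fact that nontrivial affine maps \(x\mapsto n^m x+t\) have only rational fixed points), and then applies Proposition~\ref{prop : free part} and the condensation dichotomy of Proposition~\ref{prop : smooth acc points}. That argument is softer, with no encoding and no appeal to Glimm--Effros. Yours, if the sparse-digit construction is carried out in full (the assertion that the dilations create no accidental coincidences is exactly where marker-type bookkeeping is needed), proves the stronger statement \(E_0\leq_B E_\mathsf{lo}(\BS(1,n))\) for every \(n\), which the paper only obtains for \(n=2\), and only a posteriori from hyperfiniteness plus non-smoothness. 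One small point: once you have \(E_0\leq_B E_\mathsf{lo}(\BS(1,n))\), non-smoothness follows simply because smoothness is downward closed under Borel reducibility; Harrington--Kechris--Louveau is not needed in that direction.

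The genuine gap is in your upper bound for part~\ref{item : main 2}, and it is twofold. First, nothing in your plan controls the left-orderings of \(\BS(1,2)\) that are \emph{not} of Smirnov type: your odometer/marker discussion takes place entirely on the dynamical family, but hyperfiniteness of \(E_\mathsf{lo}(\BS(1,2))\) is a statement about all of \(\LO(\BS(1,2))\). The paper handles this with a theorem of Rivas~\cite[Theorem~4.2]{Riv10}: all but countably many left-orderings of \(\BS(1,2)\) are Smirnov orderings. Hence the Smirnov set \(Y\) is Borel, conjugation-invariant, and has countable complement, giving \(E_\mathsf{lo}(\BS(1,2))\sim_B E^Y_G\). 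Without this input (or a substitute), analyzing the dynamical family proves nothing about \(E_\mathsf{lo}\). Second, the step you yourself flag as hard --- that the join of the hyperfinite \(\Z[1/2]\)-relation with the single normalizing automorphism ``conjugate by \(b\)'' remains hyperfinite --- is not a known closure property; it is an instance of the open problem (Weiss's question, and the closely related union problem) of whether Borel actions of amenable, or even solvable, groups induce hyperfinite relations. Gao--Jackson covers the abelian part, but adjoining the \(\Z\)-action is precisely where all general techniques break down, and a vague ``marker construction along the \(\Z\)-coordinate layered over the finite approximants to the tail relation'' cannot be accepted as closing it: on the free part this claim is exactly the content of Conley--Jackson--Marks--Seward--Tucker-Drob~\cite[Corollary~7.4]{CJMSTD}, a deep recent theorem proved via Borel asymptotic dimension, which is what the paper cites after restricting to \(Y\) (where the action is free). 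If you want a more hands-on substitute on the Smirnov part, the honest route would be to observe that in binary digits the affine orbit relation is ``shift plus finite difference,'' which Borel reduces to tail equivalence on \(2^{\N}\), known to be hyperfinite by Dougherty--Jackson--Kechris; but that reduction must actually be carried out, and you would still need Rivas's countability theorem for the complement. As written, your proof of the upper bound in part~\ref{item : main 2} does not close.
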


The novelty of Theorem~\ref{thm : BS} is twofold.
Theorem~\ref{thm : BS}\ref{item : main 1} provides the first examples of left-orderable solvable groups \(G\) with non-standard quotient \(\LO(G)/G\).
Moreover, Theorem~\ref{thm : BS}\ref{item : main 2} shows the first example of a finitely generated group \(G\) for which \(E_\mathsf{lo}(G)\) is not smooth, yet hyperfinite.

Using similar techniques, we also  show how work of Navas implies that \(E_\mathsf{lo}(G)\) is not smooth whenever $\LO(G)$ contains isolated points, and are able to tackle Thompson's group $F$ in a similar manner.

\begin{theorem}
\label{thm : F}
For Thompson's group \(F\), the conjugacy relation  $E_{\mathrm{lo}}(F)$  is not smooth.
\end{theorem}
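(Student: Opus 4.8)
The plan is to prove non-smoothness by exhibiting a Borel reduction into $E_{\mathrm{lo}}(F)$ of a non-smooth hyperfinite equivalence relation, namely the tail equivalence relation $E_{\mathrm{t}}$ of the one-sided shift on $2^{\N}$ (eventual equality up to a shift), which is Borel bi-reducible with $E_0$. Unlike $\BS(1,n)$---whose four lexicographic orderings are isolated and hence already covered by the isolated-orderings criterion recorded above---$F$ admits no such shortcut, since its lexicographic orderings can be perturbed on the convex subgroup $[F,F]$ and so are not isolated; I would therefore run the condensation construction underlying Theorem~\ref{thm : BS} directly. The dynamical input is the standard faithful action of $F$ on $[0,1]$ by dyadic piecewise-linear homeomorphisms, together with its self-similar cascade: the generators $x_0, x_1, x_2, \dots$ satisfy $x_0^{-1} x_n x_0 = x_{n+1}$ for $n \ge 1$, with $x_n$ supported on $[1-2^{-n},1]$, so that conjugation by $x_0$ acts as a shift on a sequence of cells accumulating at the endpoint $1$.

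Concretely, I would first replace the $x_n$ by elements $y_n$ with pairwise disjoint supports $J_n$ (say $J_n = [1-2^{-n}, 1-2^{-n-1}]$) still satisfying $x_0^{-1} y_n x_0 = y_{n+1}$, so that the $y_n$ provide independent ``condensation cells'' permuted by the shift. Applying the condensation technique, to each $\sigma \in 2^{\N}$ one attaches a left-ordering $\prec_\sigma$ obtained from a fixed base ordering by reversing the local order on the convex piece associated with $J_n$ exactly when $\sigma(n) = 1$. The first task is then to verify that $\prec_\sigma$ is a bona fide left-ordering of $F$ for every $\sigma$ and that $\sigma \mapsto \prec_\sigma$ is a continuous injection $2^{\N} \to \LO(F)$; this is the delicate but essentially mechanical check that the prescribed local reversals are compatible with multiplication in $F$, using the disjointness of the supports.

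The heart of the matter is to identify the conjugacy classes among the $\prec_\sigma$ with the classes of $E_{\mathrm{t}}$. The forward inclusion should follow from the shift-equivariance of the construction: conjugation by a power of $x_0$ implements the shift on codes, while a change in finitely many bits is absorbed by conjugation by an element of $F$ supported away from $1$, since the construction is set up to be compatible with the $F$-action on the cascade. The reverse inclusion is the main obstacle, and where I expect the bulk of the work to lie: I must produce a conjugacy invariant that recovers the tail of $\sigma$ up to shift, proving that no element of $F$ can scramble the asymptotic bit pattern near the endpoint. The natural source of such rigidity is the affine germ of the $F$-action at $1$, which has slope a power of $2$ and therefore forces any $g \in F$ to act near $1$ as a bounded shift of the cascade followed by a perturbation of only finitely many cells; turning this asymptotic control into the statement that the germ data constitute a complete conjugacy invariant for the tail of the code is the crux. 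Granting it, $\sigma \mapsto \prec_\sigma$ is a Borel reduction of $E_{\mathrm{t}}$ to $E_{\mathrm{lo}}(F)$, so $E_{\mathrm{lo}}(F)$ is not smooth.
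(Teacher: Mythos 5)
The decisive flaw is in the forward direction of your reduction, i.e.\ the claim that tail-equivalent codes give conjugate orderings. Your mechanism for realizing a finite change of bits --- ``reversing the local order on the convex piece associated with $J_n$'' via ``conjugation by an element of $F$ supported away from $1$'' --- is not available in $F$, because conjugation transports a dynamically defined ordering along an \emph{orientation-preserving} homeomorphism and therefore can never reverse a local orientation. Concretely, let $g = b_{J_n}^+$ be a positive bump supported in $J_n$. For every $h \in F$ the conjugate $h^{-1}gh$ also moves every point of its support upward (since $h^{-1}$ is increasing), so for orderings of first-moved-point type $h^{-1}gh$ is positive with respect to the all-positive base code $\bar 0$; that is, $g \in hP_{\bar 0}h^{-1}$ for \emph{every} $h$. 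Yet $g$ is negative in the ordering $\prec_\sigma$ obtained by flipping the single cell $J_n$. Since $\bar 0$ and that $\sigma$ are tail equivalent, your map is not even a homomorphism from $E_{\mathrm{t}}$ to conjugacy. The contrast with the paper's $H_\infty$ example is instructive: there bit flips are implemented by conjugation only because of the relations $x_i x_{i-1} x_i^{-1} = x_{i-1}^{-1}$. Thompson's group $F$ is bi-orderable, so no nontrivial element is conjugate to its inverse; your relation $x_0^{-1} y_n x_0 = y_{n+1}$ implements the shift, but no conjugation can implement a bit change. On top of this, you yourself leave the reverse inclusion (``the crux'') entirely open, so the proposal would be incomplete even if the forward direction held.

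For comparison, the paper's proof is much simpler and avoids reductions altogether: it never constructs a copy of $E_0$ (which, by the Harrington--Kechris--Louveau dichotomy, is abstractly equivalent to non-smoothness but far harder to exhibit explicitly). Instead it produces a single ordering in the free part $\LO((F))$: fix an enumeration $(r_i)$ of $\mathbb{Q}\cap(0,1)$ and let $P_e$ declare $g$ positive iff $g$ moves up the first $r_i$ that it moves. Proposition~\ref{prop:goodfunction}, built exactly from your kind of bumps $b_I^{\pm}$, supplies for each $h \neq id$ an element $g$ fixing $r_0,\dots,r_{N-1}$, moving $r_N$ up and $\rho(h)(r_N)$ down; then $g \in P_e \setminus hP_eh^{-1}$, so $P_e$ has trivial stabilizer. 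Proposition~\ref{prop : free part} (a point of the free part is condensed) combined with Proposition~\ref{prop : smooth acc points} yields non-smoothness. If you want to salvage your setup, drop the codes: your disjointly supported cascade $(y_n)$ is precisely the kind of family that makes this free-part verification go through. Finally, a small factual correction: the paper's proof of Theorem~\ref{thm : BS} also goes through the free part (Smirnov's orderings), not through isolated orderings.
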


Central to our analysis is the idea of condensed left-orderings in \(\LO(G)\), which are orderings that can be approximated by their conjugates.  Their existence turns out to be equivalent to non-smoothness of $E_{\mathsf{lo}}(G)$ (Proposition \ref{prop : smooth acc points}), moreover, they can be detected by analyzing \(\LO((G))\), the free part of the conjugacy \(G\)-action on \(\LO(G)\). (See Proposition \ref{prop : free part}.)  

\section{Condensed points}

A \emph{Polish space} is a separable and completely metrizable topological space.
For a Polish space \(X\) we denote by \(F(X)\) the Effros standard Borel space of closed subsets of \(X\). The standard Borel structure on \(F(X)\) is generated by the sets \[F_U=\{F\in F(X) \mid F\cap U\neq \emptyset\}\] for all open \(U\subseteq X\).

An equivalence relation \(E\) on the Polish space \(X\) is \emph{Borel} if \(E \subseteq X\times X\) is a Borel subset of \(X\times X\). Most of the Borel equivalence relations that we will consider in this paper arise from group actions as follows. Let \(G\) be a countable discrete group. Then a \emph{Polish \(G\)-space} is a Polish space \(X\) equipped with a continuous action \((g, x) \mapsto g \cdot x\) of \(G\) on \(X\). The corresponding orbit equivalence relation on \(X\), which we will denote by \(E^X_G\), is a Borel equivalence relation with countable classes. Let \(X\) be a fixed Polish \(G\)-space. For a subgroup \(H\leq G\), denote \(\Orb_H(x)\) the orbit of \(x\) under the induced \(H\)-action.
Whenever \(G=H\), we let \(\Orb(x)= \Orb_G(x)\).

Recall that a Borel equivalence relation \(E\) is \emph{smooth} if there exist a standard Borel space \(Y\) and a Borel map \(\theta\colon X \to Y\) such that
\[
x_1\mathbin{E} x_2 \iff \theta(x_1) = \theta(x_2).
\]

An equivalence relation on a Polish space is \emph{generically ergodic} if
every invariant set with the Baire property is meager or comeager. 
 Whenever \(X\) is a Polish \(G\)-space, the following are equivalent:
\begin{enumerate-(i)}
\item 
\(E_G^X\) is generically ergodic.
\item
There is \(x\in X\) such that \(\Orb(x)\) is dense in \(X\).
\end{enumerate-(i)}

Generic ergodicity is an obstruction to smoothness in many cases. In this manuscript, we will use the following fact:

\begin{proposition}[{E.g., see \cite[Corollary~3.5]{Hjo}}]
Suppose that \(G\) is a countable group and \(X\) is a Polish \(G\)-space with no isolated points.
If \(E^X_G\) is generically ergodic, then \(E^X_G\) is not smooth.
\end{proposition}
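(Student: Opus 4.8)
The plan is to prove the contrapositive: assuming $E^X_G$ is smooth, I will exhibit an invariant set with the Baire property that is neither meager nor comeager, contradicting generic ergodicity. Smoothness gives a Borel map $\theta\colon X \to Y$ into a standard Borel space with $x_1 \mathbin{E^X_G} x_2 \iff \theta(x_1) = \theta(x_2)$. The key observation is that since $G$ is countable, every orbit is countable, and hence (provided $X$ has no isolated points) every orbit is meager. Indeed, a countable set in a Polish space with no isolated points is meager, because each singleton is nowhere dense.

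The main step is to combine smoothness with a category argument. By generic ergodicity there is a point $x_0$ with dense orbit; I would instead argue directly via a topological $0$--$1$ law. First I would invoke the standard fact that a generically ergodic orbit equivalence relation has the property that every invariant Borel set is meager or comeager, and that there is a comeager orbit-invariant Borel set on which the action is ``generic'' in the appropriate sense. Concretely, smoothness lets me pull back: fix the map $\theta$, and consider the pushforward structure. The preimages $\theta^{-1}(\{y\})$ for $y \in Y$ are exactly the $E^X_G$-classes, i.e.\ the orbits, each of which is meager. Now I would use a selector: by smoothness and standardness of $Y$, there is a Borel transversal or at least a Borel assignment reducing $E^X_G$ to equality, which combined with generic ergodicity forces a single class to be comeager. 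But a single orbit is meager, giving the contradiction.

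More carefully, the cleanest route is: generic ergodicity implies that for the (Baire-measurable) function $\theta$, its level sets partition $X$ into invariant sets each with the Baire property; by generic ergodicity each such level set (being invariant with the Baire property) is meager or comeager. Since the level sets are disjoint and cover $X$, at most one can be comeager, and hence exactly one is comeager (as $X$ is nonmeager in itself by the Baire category theorem) while all others are meager. That single comeager level set is one orbit $\Orb(x)$. But $\Orb(x)$ is countable, and in a Polish space with no isolated points a countable set is meager --- contradiction. Therefore $E^X_G$ cannot be smooth.

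The step I expect to be the main obstacle is justifying that the level sets $\theta^{-1}(\{y\})$ are invariant sets \emph{with the Baire property} to which generic ergodicity applies, and pinning down that exactly one is comeager. The invariance is immediate from the defining equivalence $x_1 \mathbin{E^X_G} x_2 \iff \theta(x_1) = \theta(x_2)$, since $E^X_G$-classes are $G$-orbits. The Baire property of each level set follows because $\theta$ is Borel (hence Baire measurable) and singletons in the standard Borel space $Y$ are Borel. The delicate point is the partition-into-meager-or-comeager argument: I must ensure the family of level sets, though possibly uncountable, still admits the conclusion that at most one is comeager (two disjoint comeager sets would have nonempty, indeed comeager, intersection, which is impossible) and that their union being all of $X$ forces one to be comeager by appealing to the fact that a generically ergodic action has a dense orbit, which must then lie in the unique comeager class. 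Reconciling the uncountable index set with a clean category count is where I would be most careful, but this is exactly the content cited from \cite[Corollary~3.5]{Hjo}, so I would lean on that standard machinery rather than reprove it from scratch.
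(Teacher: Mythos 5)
Your overall strategy---assume smoothness, argue that a single $E^X_G$-class must be comeager, then contradict this because each class is a countable orbit and hence meager in a Polish space with no isolated points---is the right one (note the paper itself does not prove this proposition; it cites Hjorth). But the decisive step, that \emph{some} class is comeager, is exactly where your argument has a genuine gap. Applying generic ergodicity to the level sets $\theta^{-1}(\{y\})$ tells you each class is meager or comeager, and your ``at most one is comeager'' observation is fine; but you cannot conclude ``at least one is comeager'' from the fact that the level sets cover $X$. The Baire category theorem forbids covering $X$ by \emph{countably} many meager sets, whereas your partition is typically uncountable, and an uncountable union of meager sets can perfectly well be all of $X$ --- indeed, if no class were comeager, that is precisely the (non-contradictory) configuration you would be facing. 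Your fallback, that the dense orbit guaranteed by generic ergodicity ``must lie in the unique comeager class,'' is circular: it presupposes the comeager class whose existence is at issue, and density carries no category information (a countable dense orbit is itself meager, e.g.\ $\mathbb{Q}\subseteq\mathbb{R}$). Finally, leaning on \cite[Corollary~3.5]{Hjo} to close this gap is not available, since that corollary \emph{is} the statement being proved.

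The standard repair replaces countability of the partition with countability of a \emph{separating family}. Since $Y$ is standard Borel, fix countably many Borel sets $\{B_n\}$ in $Y$ separating points (e.g.\ embed $Y$ into $2^{\mathbb{N}}$ and pull back the basic clopen sets). Each $A_n=\theta^{-1}(B_n)$ is an invariant Borel set, hence meager or comeager by generic ergodicity. Let
\[
C \;=\; \bigcap\,\{A_n : A_n \text{ comeager}\} \;\cap\; \bigcap\,\{X\setminus A_n : A_n \text{ meager}\},
\]
a \emph{countable} intersection of comeager sets, hence comeager. For $x,x'\in C$ we have $\theta(x)\in B_n \iff \theta(x')\in B_n$ for every $n$, so $\theta(x)=\theta(x')$ and thus $x \mathbin{E^X_G} x'$. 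Hence $C$ is contained in a single class, which is therefore comeager; your meagerness observation (a countable orbit in a perfect Polish space is meager) then contradicts the Baire category theorem, completing the proof.
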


Following the terminology of Osin~\cite{Osi21APAL,Osi21}, we say that a point \(x\in X\) is \emph{condensed} if it is an accumulation point of \(\Orb(x)\).

The next proposition is essentially due to Osin~\cite[Proposition~2.7]{Osi21APAL}, who analyzed condensation in the Polish space of finitely generated marked groups. Since we could not find the proof in the literature, we give the proof of this general fact below.

\begin{proposition}
\label{prop : smooth acc points}
Suppose that \(G\) is a countable group and \(X\) is a Polish \(G\)-space. Then the following are equivalent:
\begin{enumerate-(1)}
\item
\label{item : 1}
\(E^X_G\) is smooth.
\item
\label{item : 2}
The are no condensed points in \(X\).
\end{enumerate-(1)}
\end{proposition}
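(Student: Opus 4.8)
The plan is to prove the two implications separately, proving \ref{item : 1}~$\Rightarrow$~\ref{item : 2} by contraposition and \ref{item : 2}~$\Rightarrow$~\ref{item : 1} directly, with the closure of a condensed orbit and Effros' theorem as the two main inputs. For \ref{item : 1}~$\Rightarrow$~\ref{item : 2}, assume a condensed point \(x_0\) and produce a closed invariant subspace on which \(E^X_G\) fails to be smooth. Set \(Y = \overline{\Orb(x_0)}\). Since every \(g\in G\) acts as a homeomorphism, \(Y\) is a \(G\)-invariant closed (hence Polish) subspace, and applying the homeomorphisms \(h\in G\) to a sequence \(g_n\cdot x_0\to x_0\) with \(g_n\cdot x_0\neq x_0\) shows that \emph{every} point of \(\Orb(x_0)\) is an accumulation point of the orbit; thus \(\Orb(x_0)\) has no isolated points. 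Consequently \(Y\) has no isolated points, since an isolated point of \(Y\) would lie in the dense set \(\Orb(x_0)\) and be isolated there. As \(\Orb(x_0)\) is dense in \(Y\) by construction, the action on \(Y\) has a dense orbit, so \(E^Y_G\) is generically ergodic by the equivalence recorded above. The quoted result of Hjorth then applies to the perfect Polish \(G\)-space \(Y\) and shows that \(E^Y_G\) is not smooth; since \(Y\) is \(G\)-invariant and Borel, any smooth reduction for \(E^X_G\) would restrict to one for \(E^Y_G\), so \(E^X_G\) is not smooth.

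For \ref{item : 2}~$\Rightarrow$~\ref{item : 1}, suppose there are no condensed points. Then no point is an accumulation point of its own orbit, and since \(G\) permutes each orbit by homeomorphisms, every orbit is relatively discrete. Because \(G\) is countable and discrete, hence Polish, Effros' theorem applies: a relatively discrete orbit is a \(G_\delta\), and a \(G_\delta\) orbit of a Polish group action is locally closed. So every orbit is locally closed, and it remains to see that a Polish \(G\)-space all of whose orbits are locally closed has smooth orbit equivalence relation. I would produce a Borel transversal by a Cantor--Bendixson-style transfinite stratification: set \(C_0=X\), and, having defined a closed invariant \(C_\alpha\), remove the invariant union \(O_\alpha\) of those orbits that are relatively closed in \(C_\alpha\) to form \(C_{\alpha+1}\), taking intersections at limit stages. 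Local closedness forces the process to exhaust \(X\) at some countable stage; on each stratum \(O_\alpha\) the invariant Borel map \(x\mapsto \overline{\Orb(x)}\cap C_\alpha = \Orb(x)\in F(C_\alpha)\) separates orbits, and pairing this with the stratum index assembles into a Borel reduction of \(E^X_G\) to equality on a standard Borel space.

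The main obstacle lies in this second direction, namely in making the transfinite peeling genuinely Borel and uniform. Checking that ``\(\Orb(x)\) is relatively closed in \(C_\alpha\)'' defines a Borel invariant set, that the derivation terminates at a countable ordinal, and that the level-by-level selectors splice into a single Borel map all require the standard but delicate machinery surrounding Effros' theorem. I therefore expect to invoke the classical fact that locally closed orbits yield smoothness rather than reprove the stratification in full, so that the genuinely new content of the argument is concentrated in the clean first direction via \(\overline{\Orb(x_0)}\) and generic ergodicity.
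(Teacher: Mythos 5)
Your proposal is correct and follows essentially the same route as the paper: your first direction is the paper's argument in contrapositive form (pass to \(Y=\overline{\Orb(x_0)}\), note the dense orbit gives generic ergodicity, check \(Y\) is perfect, and apply the quoted Hjorth result), and your second direction — no condensed points, hence discrete, hence \(G_\delta\) orbits, hence smooth — is exactly the paper's, which likewise does not reprove the final step but cites the standard fact (Gao, Exercise~5.4.8) that when orbits are \(G_\delta\) the Borel map \(x\mapsto \overline{\Orb(x)}\in F(X)\) is a reduction of \(E^X_G\) to equality. Your Cantor--Bendixson stratification sketch is an unnecessary detour whose delicate points (countable termination, Borelness of the strata) you rightly decline to carry out; since you ultimately invoke the classical locally-closed-orbits-imply-smoothness fact instead, the proof stands as essentially identical to the paper's.
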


\begin{proof}
Suppose that \(E^X_G\) is smooth and let \(x\) be any element of \(X\).
Consider the closed \(G\)-invariant set \(Y=\overline{\Orb(x)}\). If \(E^X_G\) is smooth, then \(E^Y_G\) is also smooth. As \(\Orb(x)\) is a dense \(G\)-orbit in \(Y\), the action \(G\curvearrowright Y\) is generically ergodic,  so there must be an isolated point in \( x_0 \in Y\). The point $x_0$ cannot be an element of $Y \setminus \Orb(x)$ since these points are non-isolated by definition, and so \(x_0 \in \Orb(x)\). Now as the $G$-action is continuous, every point of \(\Orb(x)\),  and in particular \(x\) itself, must be isolated in the subspace topology.  It follows that $x$ cannot be a condensed point.

On the other hand, suppose that no $x\in X$ is a condensed point.  Then for every $x\in X$, the subspace topology on $\Orb(x)$ is discrete, and since $\Orb(x)$ is countable it is therefore Polish.  By Alexandrov's theorem, $\Orb(x)$ must be a $G_{\delta}$ set for all $x \in X$. (See~\cite[Theorem~3.11]{Kec}.)  Further note that the saturation of an arbitrary open set $U \subset X$ is itself open, since the saturation can be written as a union of the sets $g \cdot U$ where $g \in G$, each of which is open since $G$ acts continuously.  This implies that the map \(X\to F(X), x\mapsto \overline{\Orb(x)}\) is Borel showing that \(E^X_G\) is Borel reducible to \(=_{F(X)}\). (E.g., see~\cite[Exercise~5.4.8]{Gao}.) 
\end{proof}




\section{The conjugacy relation on the spaces of left-orderings}

A group \(G\) is \emph{left-orderable} if it admits a strict total ordering \(<\) such that \(g < h\) implies \(fg < fh\) for all \(f, g, h \in G\).

\begin{proposition}
\label{prop : equivalent defs LO}
The following are equivalent:
\begin{enumerate-(1)}
\item
\(G\) is left-orderable.
\item
There is \(P\subseteq G\) such that
	\begin{enumerate-(a)}
	\item
	\(P \cdot P\subseteq G\);
	\item
	\(P\sqcup P^{-1} = G\setminus \{id\}\).
	\end{enumerate-(a)}
\item
\label{item : equivalent defs LO 3}
There is a totally ordered set \((\Upomega, <)\) such that \(G\hookrightarrow \Aut(\Upomega,<)\).
\end{enumerate-(1)}
\end{proposition}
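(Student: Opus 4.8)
The plan is to verify the equivalences by closing a short loop of implications built around the standard \emph{positive cone} and \emph{dynamical} characterizations. I would first treat the equivalence \((1)\Leftrightarrow(2)\). For \((1)\Rightarrow(2)\), given a left-ordering \(<\) I would set \(P=\setm{g\in G}{\id<g}\) and check the two defining properties: multiplicative closure (property (a)) holds because \(\id<g\) and \(\id<h\) give \(\id<g<gh\) by left-invariance, so \(gh\in P\); and property (b) holds because for \(g\neq\id\) totality forces exactly one of \(\id<g\) or \(g<\id\), while multiplying \(g<\id\) on the left by \(g^{-1}\) shows \(g<\id \iff \id<g^{-1}\), so that \(P\) and \(P^{-1}\) partition \(G\setminus\set{\id}\). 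For the converse \((2)\Rightarrow(1)\), given such a \(P\) I would define \(g<h \iff g^{-1}h\in P\). Here left-invariance is immediate since \((fg)^{-1}(fh)=g^{-1}h\); totality and asymmetry come directly from \(P\sqcup P^{-1}=G\setminus\set{\id}\); and transitivity is precisely the multiplicative closure of \(P\).

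For \((1)\Rightarrow(3)\), I would take \((\Upomega,<)\) to be \((G,<)\) itself with \(G\) acting by left translation. Each translation \(x\mapsto gx\) is order-preserving by the very definition of a left-ordering, so it lies in \(\Aut(\Upomega,<)\), and the resulting homomorphism \(G\to\Aut(G,<)\) is injective because evaluating at \(\id\) recovers \(g\cdot\id=g\). In other words, this is just the order-preserving left regular representation.

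The substantive direction is \((3)\Rightarrow(1)\). Since a subgroup of a left-orderable group is again left-orderable (restrict the ordering, or intersect the positive cone with the subgroup), it suffices to show that \(\Aut(\Upomega,<)\) is left-orderable for every totally ordered set \((\Upomega,<)\). To do this I would fix a well-ordering of the underlying set \(\Upomega\) and order \(\Aut(\Upomega,<)\) lexicographically: for distinct \(f,g\), let \(w\) be the well-ordering-least point at which \(f(w)\neq g(w)\), and declare \(f\prec g \iff f(w)<g(w)\). Existence of the least such point (hence totality of \(\prec\)) follows from the well-ordering, and asymmetry is clear. The two properties requiring care are left-invariance and transitivity. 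Left-invariance is clean: for \(h\in\Aut(\Upomega,<)\), injectivity of \(h\) shows that \(hf\) and \(hg\) first disagree at the same point \(w\), and order-preservation of \(h\) gives \(hf(w)<hg(w)\iff f(w)<g(w)\), whence \(f\prec g\iff hf\prec hg\). Transitivity I would verify by a case analysis on the relative position of the two least-difference points, using totality of \(<\) on \(\Upomega\) to locate the least-difference point of the composite comparison.

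The main obstacle is thus concentrated entirely in \((3)\Rightarrow(1)\), and it is essentially organizational: bookkeeping the least-difference points in the lexicographic comparison to push through transitivity, together with the (harmless) appeal to a well-ordering of \(\Upomega\). Everything else reduces to the routine back-and-forth between an ordering and its positive cone.
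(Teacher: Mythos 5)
Your proof is correct, but there is nothing in the paper to compare it against: the authors state Proposition~\ref{prop : equivalent defs LO} without proof, as a classical fact, and immediately move on to defining positive cones and the topology on \(\LO(G)\). What you give is precisely the standard argument: the cone/ordering dictionary for \((1)\Leftrightarrow(2)\), the order-preserving left regular representation for \((1)\Rightarrow(3)\), and the well-ordering/lexicographic comparison on \(\Aut(\Upomega,<)\) followed by restriction to a subgroup for \((3)\Rightarrow(1)\).

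Two remarks worth recording. First, condition (a) as printed in the paper reads \(P \cdot P \subseteq G\), which is vacuous; it is evidently a typo for \(P \cdot P \subseteq P\), and your proof silently (and correctly) proves and uses the intended closure property --- for instance, transitivity of the order \(g < h \iff g^{-1}h \in P\) in \((2)\Rightarrow(1)\) is exactly \(P\cdot P \subseteq P\). Second, in \((3)\Rightarrow(1)\): your appeal to a well-ordering of \(\Upomega\) uses the axiom of choice when \(\Upomega\) is uncountable, which is standard and harmless here; and in the transitivity case analysis one should first note that \(f \neq h\) (otherwise \(f \prec g\) and \(g \prec h = f\) would contradict the asymmetry you have already established), so that the least disagreement point of \(f\) and \(h\) exists. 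After that, the three cases --- \(w_1\) strictly before \(w_2\) in the well-ordering, \(w_2\) strictly before \(w_1\), and \(w_1 = w_2\) --- all close exactly as you indicate, using that in the first two cases the functions agree below the earlier disagreement point, and in the third case transitivity of \(<\) on \(\Upomega\). The argument is complete.
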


The subset $P$ in $(2)$ above is referred to as a \emph{positive cone}.  Every left-ordering $<$ of $G$ determines a positive cone $P_{<}= \{ g \in G : g> id\}$.  The identification of left-orderings with the corresponding positive cones allows us to define the space of left-orderings as follows.  Equip $\{0,1\}$ with the discrete topology, $\{0,1\}^G$ with the product topology, and set 
\[ \LO(G) = \{ P \subset G : P \mbox{ is a positive cone }\} \subset \{0,1\}^G,
\]
equipped with the subspace topology.  Note that the subbasic open sets in $\LO(G)$ are the sets of the form $U_g = \{ P : g \in P \}$, where $ g \in G\setminus \{ id \}$.  One can easily check that $\LO(G)$ is a closed subset of \(\{0,1\}^G\), hence a compact Polish space. We regard \(\LO(G)\) as a Polish \(G\)-space in the following precise sense. There is a $G$-action by homeomorphisms on $\LO(G)$, given by $g \cdot P = gPg^{-1}$. As mentioned in the introduction, we denote by \(E_\mathsf{lo}(G)\) the orbit equivalence relation on \(\LO(G)\) induced by the conjugacy \(G\)-action.

\subsection{Smoothness of \(E_\mathsf{lo}(G)\) and relatively convex subgroups} 
Let \(G\) be a group equipped with a fixed left-ordering $<$. A subgroup $C$ of $G$ is \emph{convex relative to $<$} if whenever $g, h\in C$ and $f \in G$ with $g<f<h$, then $f \in C$.  A subgroup  \(C\subseteq G\) is \emph{left-relatively
 convex} in \(G\) (or \emph{relatively convex} in \(G\) for short) if \(C\) is convex relative to some left ordering of \(G\). 

Suppose that \(E\), \(F\) are countable Borel equivalence relations on the Polish spaces \(X\) and \(Y\), respectively.
Then the Borel map \(\varphi \colon X \to Y\)
is a \emph{Borel homomorphism}
from \(E\) to \(F\)
if \(x\mathbin{E} y \implies \varphi(x)\mathbin{F}\varphi(y)\). If the Borel homomorphism \(\varphi \colon X \to Y\)
from \(E\)
to \(F\)
is countable-to-one, then we say that \(\varphi\) is a \emph{weak Borel reduction} (in symbols, \(E\leq^w_{B} F\)). As pointed out by the kind referee we can establish the following fact:

 \begin{proposition}
 \label{prop : convex}
     If \(C\) is relatively convex in \(G\), then \(E_\mathsf{lo}(C) \leq^w_B E_\mathsf{lo}(G)\).
     Thus, any property of \(E_\mathsf{lo}(G)\) which is downward closed under weak Borel reductions (such as
smoothness, hyperfiniteness, \(\alpha\)-amenability, treeability, etc.), passes to \(E_\mathsf{lo}(C)\) for every relatively
convex subgroup \(C \leq G\).
 \end{proposition}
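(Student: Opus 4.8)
The plan is to construct an explicit Borel map $\varphi\colon \LO(C) \to \LO(G)$ witnessing the weak reduction, exploiting the convexity of $C$. The central idea is that a left-ordering of the convex subgroup $C$ can be ``extended'' to a left-ordering of all of $G$ by fixing, once and for all, some background left-ordering $\prec$ of $G$ in which $C$ is convex, and then using $\prec$ to order the cosets while using the given ordering of $C$ to order within cosets. Concretely, since $C$ is convex relative to $\prec$, the quotient set of left cosets $G/C$ inherits a $G$-invariant linear ordering from $\prec$, and any $P \in \LO(C)$ together with this coset ordering should determine a positive cone on $G$: an element $g \in G \setminus C$ is declared positive according to which side of $C$ its coset lies on in $G/C$, while elements of $C \setminus \{\id\}$ are declared positive exactly when they lie in $P$. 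First I would verify carefully that this recipe does produce a genuine positive cone on $G$ (checking closure under multiplication, which is where the convexity of $C$ relative to $\prec$ is essential), and that the resulting map $\varphi$ is Borel.

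Next I would check that $\varphi$ is a homomorphism of equivalence relations, i.e.\ that $P_1 \mathbin{E_\mathsf{lo}(C)} P_2$ implies $\varphi(P_1) \mathbin{E_\mathsf{lo}(G)} \varphi(P_2)$. If $P_2 = c P_1 c^{-1}$ for some $c \in C$, then I expect $\varphi(P_2) = c\,\varphi(P_1)\,c^{-1}$, because conjugation by an element of $C$ permutes the cosets in a way compatible with the $\prec$-ordering on $G/C$ (the coset part of the construction is unaffected, and the $C$-part transforms correctly). Verifying this compatibility is a direct computation using $G$-invariance of the coset ordering, and it is the crux of showing $\varphi$ is a homomorphism rather than merely a map into $\LO(G)$.

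The main obstacle, and the step requiring the most care, is showing that $\varphi$ is countable-to-one, which is exactly what upgrades a mere Borel homomorphism to a weak Borel reduction. Here I would argue that if $\varphi(P_1)$ and $\varphi(P_2)$ lie in the same $G$-orbit, say $\varphi(P_2) = g\,\varphi(P_1)\,g^{-1}$ for some $g \in G$, then the conjugating element $g$ must essentially normalize the coset structure; since the positive cone $\varphi(P_i)$ encodes the $\prec$-ordering on $G/C$ and this ordering is rigidly determined, only finitely many (or boundedly many) cosets' worth of $g$ can realize such a conjugacy, forcing the preimage of each $G$-orbit to meet only countably many $C$-orbits. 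Making this rigidity argument precise — pinning down how the fixed background ordering $\prec$ constrains which $g \in G$ can move one of our constructed orderings to another — is where the real work lies.

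Finally, once the weak reduction $E_\mathsf{lo}(C) \leq^w_B E_\mathsf{lo}(G)$ is established, the second sentence of the proposition is immediate: by definition, any class of countable Borel equivalence relations closed downward under weak Borel reducibility (smoothness, hyperfiniteness, treeability, and the $\alpha$-amenability hierarchy all qualify, since these are standard closure properties recorded in the literature) transfers membership from $E_\mathsf{lo}(G)$ back to $E_\mathsf{lo}(C)$. I would simply invoke this closure, citing the relevant references, with no further computation needed.
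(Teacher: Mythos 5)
Your construction is exactly the paper's: fix a $G$-invariant linear ordering on $G/C$ (equivalently, a background ordering of $G$ in which $C$ is convex), and extend each $P \in \LO(C)$ lexicographically, ordering elements of $G \setminus C$ by their cosets and elements of $C$ by $P$. The verification that this gives a positive cone and that the map is $C$-equivariant (hence a Borel homomorphism from $E_\mathsf{lo}(C)$ to $E_\mathsf{lo}(G)$) goes through just as you describe. The gap is in what you call the main obstacle. No rigidity argument about conjugating elements is needed, and the one you sketch (``$g$ must essentially normalize the coset structure,'' ``only finitely many cosets' worth of $g$ can realize such a conjugacy'') is an assertion rather than a proof; it is far from clear it could be made precise. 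The correct observation is much simpler: your map $\varphi$ satisfies $\varphi(P) \cap C = P$ by construction, since an element of $C\setminus\{\id\}$ is declared positive in $\varphi(P)$ exactly when it lies in $P$. Hence $\varphi$ is injective, and injective maps are trivially countable-to-one. This is precisely how the paper concludes: the extension map is one-to-one and $C$-equivariant, and that already makes it a weak Borel reduction under the paper's definition.

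There is also a conceptual slip worth flagging. ``Countable-to-one'' means that each point fiber $\varphi^{-1}(\{Q\})$, for $Q \in \LO(G)$, is countable; it is not the statement that the preimage of each $G$-orbit meets only countably many $C$-orbits. Your formulation is salvageable --- by $C$-equivariance the preimage of a $G$-orbit is a union of $C$-orbits, each of which is countable, so countably many $C$-orbits per preimage does imply countable fibers --- but aiming at that reformulation is what led you to look for a rigidity argument where none is required. With the injectivity observation substituted for your third paragraph, the proof is complete; your final paragraph on downward closure under weak Borel reductions is fine as stated.
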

 \begin{proof}
Suppose that \(C\) is 
relatively convex in \(G\).
We let \(G\) act on the quotient \(G/C\) by conjugation.
A subgroup \(C\) is relatively
convex in a left-orderable group \(G\) if and only if there is a \(G\)-invariant linear order on \(G/C\) (e.g., see~\cite{AntDicSun,Ber90}), so fix such a linear order \(<_{G/C}\) . Then for every left-order \(P\in \LO(C)\), define the left-order \(\bar P \in \LO(G)\) lexicographically
by declaring
\[
g\in \bar P\qquad \iff \qquad C <_{G/C} gC\quad\text{or}\quad(g\in C\text{ and }g\in P).
\]
I.e., \(\bar P\) is the union of \(P\) and all positive cosets. Then the function \(\LO(C) \to \LO(G), P \to \bar P\) is one-to-one and 
\(C\)-equivariant. In particular, it is a one-to-one weak Borel reduction from
\(E_\mathsf{lo}(C)\) to \(E_\mathsf{lo}(G)\).
 \end{proof}



We leverage the dichotomy established in Proposition~\ref{prop : smooth acc points} to reprove a special case of Proposition~\ref{prop : convex}: that the non-smoothness of \(E_\mathsf{lo}(G)\) is detected by relatively convex subgroups. First we need the following observation.

\begin{proposition}
\label{prop : convex smooth}  Suppose that $G$ is left-orderable, \(C\leq G\) is relatively convex, and \(Q\) is condensed in \(\LO(C)\). If \(P\in \LO(G)\) satisfies \(P\cap C = Q\), then \(P\) is condensed in \(\LO(G)\). 
\end{proposition}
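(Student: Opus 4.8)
The plan is to witness the condensation of $P$ using the very elements of $C$ that witness the condensation of $Q$. Since $Q$ is condensed in $\LO(C)$, I would first fix a sequence $(c_n)$ in $C$ with $c_n Q c_n^{-1} \neq Q$ for all $n$ and $c_n Q c_n^{-1} \to Q$ in $\LO(C)$. The goal is then to prove that $c_n P c_n^{-1} \to P$ in $\LO(G)$ while $c_n P c_n^{-1} \neq P$; this exhibits $P$ as an accumulation point of $\Orb(P)$, i.e.\ as condensed.

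I would begin with the coordinates indexed by $C$, which are immediate. For $g \in C$ we have $c_n^{-1} g c_n \in C$, so
\[
g \in c_n P c_n^{-1} \iff c_n^{-1} g c_n \in P \cap C = Q \iff g \in c_n Q c_n^{-1},
\]
whence $(c_n P c_n^{-1}) \cap C = c_n Q c_n^{-1}$. Thus the restrictions to $C$ converge to $Q = P \cap C$, and since $c_n Q c_n^{-1} \neq Q$ we also obtain $c_n P c_n^{-1} \neq P$. This settles distinctness and the convergence of every coordinate lying in $C$.

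The hard part will be the coordinates indexed by $G \setminus C$: I must show that for each fixed $g \in G \setminus C$ one has $c_n^{-1} g c_n \in P \iff g \in P$. This is where relative convexity enters. Using that $C$ is convex relative to $<_P$, the $P$-status of an element outside $C$ is governed by the induced $G$-invariant order $\prec$ on $G/C$: for $g \notin C$ one has $g \in P \iff C \prec gC$, because $g$ lies either above all of $C$ or below all of $C$. Since $c_n C = C$ and $\prec$ is $G$-invariant, the coset $c_n^{-1} g C$ of $c_n^{-1} g c_n$ satisfies $C \prec c_n^{-1} g C \iff c_n C \prec gC \iff C \prec gC$; as $c_n^{-1} g c_n \notin C$, the same characterization gives $c_n^{-1} g c_n \in P \iff g \in P$, in fact for every $n$. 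Combining the two paragraphs yields $c_n P c_n^{-1} \to P$.

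I expect the decisive obstacle to be precisely this control on $G \setminus C$: the hypothesis $c_n Q c_n^{-1} \to Q$ only constrains coordinates inside $C$, and a priori conjugation by $c_n$ can push an element $g \notin C$ into a different coset $c_n^{-1} g C$ and flip its sign. The mechanism that rescues the argument is that relative convexity ties the sign of $g$ to the invariant coset order, which conjugation by $C$ preserves. I would therefore isolate as the key lemma the statement that conjugation by any element of $C$ fixes the $P$-status of every element of $G \setminus C$, leaving the $C$-coordinates as the only moving ones, where the condensation of $Q$ supplies the convergence.
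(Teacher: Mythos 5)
Your proof is correct, and it follows the same skeleton as the paper's: split the coordinates of a conjugate \(cPc^{-1}\) into those indexed by \(C\), where the condensation of \(Q\) does all the work (and also yields \(cPc^{-1}\neq P\)), and those indexed by \(G\setminus C\), where one shows that conjugation by elements of \(C\) changes nothing. The only substantive difference is how this key lemma is justified. The paper argues directly: given a basic neighborhood \(\bigcap_{i=1}^n U_{g_i}\) of \(P\) with \(g_i\in P\setminus C\), it splits into the cases \(c\in P\) and \(c\in P^{-1}\), uses convexity to get \(c<_P g_i\) (resp.\ \(g_i^{-1}<_P c\)), and concludes \(c^{-1}g_ic\in P\) from the semigroup property of \(P\). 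You instead invoke the standard fact that a \(<_P\)-convex subgroup induces a \(G\)-invariant total order on \(G/C\) in which the sign of any \(g\notin C\) is determined by its coset; invariance under left multiplication by \(c_n\) then gives both directions of \(c_n^{-1}gc_n\in P\iff g\in P\) at once, whereas the paper only needs (and only proves) the positive direction, which suffices because basic neighborhoods in \(\LO(G)\) can be written using positive elements only. The two mechanisms are equivalent---well-definedness of the coset order is proved by exactly the manipulations the paper performs---but yours is more conceptual and citable, since the paper itself uses this fact, with references, in Proposition~\ref{prop : convex}, while the paper's version is self-contained. One caveat you share with the paper: both proofs use that \(C\) is convex relative to \(<_P\) itself, which is formally stronger than the stated hypotheses (\(C\) relatively convex and \(P\cap C=Q\)); this is harmless for the intended application, since in Corollary~\ref{cor : rel convex} the order \(P\) is constructed so that \(C\) is \(<_P\)-convex, but it means your proof, like the paper's, does not literally cover an arbitrary \(P\) with \(P\cap C=Q\). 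Finally, your sequential formulation of condensation is equivalent to the paper's neighborhood formulation, as \(\LO(G)\) is metrizable, so nothing is lost there.
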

\begin{proof}
    Fix a relatively convex \(C\leq G\) and a positive cone \(Q\in \LO(C)\) that is an accumulation point in \(\Orb_C(Q)\). Also let \(P\in\LO(G)\) with \(P\cap C =Q\).  We will need the following:
\begin{proof}[Claim]
If \(c\in C\) and \(P\in \bigcap^n_{i=1} U_{g_i}\) for \(g_i\in G\setminus C\), then \(c P c^{-1} \in \bigcap^n_{i=1} U_{g_i}\).
\renewcommand{\qedsymbol}{}
\end{proof}
\begin{proof}[Proof of the Claim]
Assume that $c \in P$. 
 Since \(C\) is convex with respect to \(<_P\), we have \(c<_Pg_i\) for all \(i=1,\dotsc, n\). So, for \(i=1,\dotsc, n\) we have  \(c^{-1}g_i \in P\) and therefore \(c^{-1}g_i c \in P\). We obtain \(g_i\in c Pc^{-1}\) for all \(i\leq n\), therefore
\(c Pc^{-1}\in \bigcap^n_{i=1} U_{g_i}\).

Next suppose that $c \in P^{-1}$.  Then $g_i^{-1} <_P c$ for all $i = 1, \ldots, n$ since $C$ is convex. Therefore $g_ic \in P$ and thus $c^{-1} g_i c \in P$ and we conclude as in the previous case, completing the proof of the claim.
\end{proof}

Now let \(P\in \bigcap^n_{i=1} U_{c_i}\cap\bigcap^m_{j=1} U_{g_j}\) for some $c_1, \ldots, c_n \in C$ and $g_1, \ldots, g_m \in G \setminus C$. Since \(P\cap C \in \Orb_C(P\cap C)'\) there exists \(c\in C\) such that 
\[
c(P\cap C) c^{-1} \neq P\cap C
\quad\text{ and }\quad
c(P\cap C) c^{-1} \in \bigcap^n_{i=1} U_{c_i}.
\] 
Then
\(cPc^{-1} = c(P\cap C)c^{-1} \cup c (P\setminus C)c^{-1}\neq P\), and
\(
cPc^{-1} \in  \bigcap^n_{i=1} U_{c_i} \cap \bigcap^m_{j=1} U_{g_i}\) by the previous claim.
\end{proof}

\begin{corollary}
    \label{cor : rel convex}
For a left-orderable group \(G\), the following are equivalent:

\begin{enumerate-(1)}
\item
\label{item : rel convex 1}
\(E_\mathsf{lo}(G)\) is smooth.
\item
\label{item : rel convex 2}
For every relatively convex \(C\leq G\) the conjugacy orbit equivalence relation \(E_\mathsf{lo}(C)\) is smooth.
\end{enumerate-(1)}
\end{corollary}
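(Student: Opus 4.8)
The plan is to prove the two implications separately, treating \ref{item : rel convex 2} $\Rightarrow$ \ref{item : rel convex 1} as immediate and reserving the condensation machinery for the reverse direction. For \ref{item : rel convex 2} $\Rightarrow$ \ref{item : rel convex 1}, I would observe that $G$ is convex relative to any of its left-orderings, and hence is relatively convex in itself; taking $C = G$ in \ref{item : rel convex 2} then yields directly that $E_\mathsf{lo}(G)$ is smooth.

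For the substantive direction \ref{item : rel convex 1} $\Rightarrow$ \ref{item : rel convex 2}, I would argue by contraposition: assuming that $E_\mathsf{lo}(C)$ fails to be smooth for some relatively convex $C \leq G$, I aim to conclude that $E_\mathsf{lo}(G)$ is not smooth either. Applying Proposition~\ref{prop : smooth acc points} to the Polish $C$-space $\LO(C)$, the non-smoothness of $E_\mathsf{lo}(C)$ produces a condensed point $Q \in \LO(C)$, that is, a positive cone $Q$ that is an accumulation point of its own $C$-conjugacy orbit.

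The key step is to transfer this condensed point upward into $\LO(G)$. Since $C$ is relatively convex in $G$, the characterization recalled in the proof of Proposition~\ref{prop : convex} supplies a $G$-invariant linear order $<_{G/C}$ on the coset space $G/C$, which permits the lexicographic extension $\bar Q \in \LO(G)$ determined by $Q$. By construction $\bar Q \cap C = Q$, since no nonidentity element of $C$ can satisfy $C <_{G/C} gC$. I would then invoke Proposition~\ref{prop : convex smooth} verbatim: because $Q$ is condensed in $\LO(C)$ and $\bar Q \cap C = Q$, the positive cone $\bar Q$ is condensed in $\LO(G)$. A final application of Proposition~\ref{prop : smooth acc points}, now to the $G$-space $\LO(G)$, shows that $E_\mathsf{lo}(G)$ is not smooth, completing the contrapositive.

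The genuine difficulty is entirely absorbed into the two propositions being cited, so the corollary itself assembles without further technical work. The delicate point lies in Proposition~\ref{prop : convex smooth}, where the relative convexity of $C$ is used to control which subbasic neighborhoods $U_{g_i}$ are preserved under conjugation by elements of $C$; given that lemma together with the smoothness-versus-condensation dichotomy of Proposition~\ref{prop : smooth acc points}, no additional estimates are required.
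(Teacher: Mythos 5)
Your proof is correct and follows essentially the same route as the paper: contraposition via Proposition~\ref{prop : smooth acc points} to get a condensed $Q \in \LO(C)$, extension to some $P \in \LO(G)$ with $P \cap C = Q$, and then Proposition~\ref{prop : convex smooth} to conclude. The only difference is cosmetic—you spell out the lexicographic extension (which the paper leaves implicit in the phrase ``we can find some positive cone $P$'') and you record the trivial direction $C = G$ explicitly.
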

\begin{proof}
    The only non-trivial implication is  \ref{item : rel convex 1} $\implies$ \ref{item : rel convex 2}. Fix a relatively convex \(C\leq G\) such that \(E_\mathsf{lo}(C)\) is not smooth. It follows from Proposition~\ref{prop : smooth acc points} that there is a positive cone \(Q\in \LO(C)\) that is an accumulation point in \(\Orb_C(Q)\).
Since \(C\) is relatively convex in \(G\) we can find some positive cone \(P\in\LO(G)\) such that \(Q = P\cap C\).
Proposition~\ref{prop : convex smooth} yields that \(P\) is condensed in \(\LO(G)\), hence \(E_\mathsf{lo}(G)\) is not smooth.
\end{proof}

\begin{remark}
Note that the condition on relatively convex subgroups in Corollary~\ref{cor : rel convex}\ref{item : rel convex 2} cannot be replaced with a condition on \emph{proper} relatively convex subgroups, as the example below shows (see also \cite{CalCla22}).
\end{remark}

Consider the infinitely generated group \footnote{This example also appears in \cite[Example~2.10]{CalCla22}, where there is a typo in the group presentation which is corrected here. We acknowledge Meng Che ``Turbo'' Ho for finding the typo and suggesting how to fix it.}
    \[ H_\infty = \langle x_1, x_2, \ldots \mid x_i x_{i-1}x_i^{-1} = x_{i-1}^{-1} \text{ for $1<i$ and } x_i x_j = x_j x_i \text{ for $|i -j|>1$} \rangle.
\]

Then for every left-ordering of $H_{\infty}$, one can show the convex subgroups are precisely the finitely generated subgroups of the form $H_j = \langle x_1, x_2, \dots ,x_j \rangle$ where $j \geq 1$.  This follows from first observing that every element $H_{j}$ can be represented by a word of the form 
\[ x_1^{a_1}x_2^{a_2} \dots x_{j}^{a_{j}}
\]
where $a_i \in \mathbb{Z}$, by using repeated applications of $x_{j} x_{j-1} = x_{j-1}^{-1} x_{j}$ and $x_{j} x_{i} = x_{i} x_{j}$ for all $i<j-1$ to shuffle all occurrences of $x_{j}$ to the right hand side of any representative word.  By writing every element of $H_{\infty}$ in this form, it is straightforward to check that $H_{j}$ is convex relative to every left-ordering of $H_{\infty}$.  Moreover, there are no other relatively convex subgroups aside from the subgroups $H_j$.  For if $C$ were such a subgroup, there would exists $j$ such that $H_j \leq C \leq H_{j+1}$.  But then $C$ should descend to a convex subgroup of $H_{j+1}/H_j \cong \mathbb{Z}$ under the quotient map, which is only possible if $C = H_j$ or $C=H_{j+1}$ since there are no proper, nontrivial convex subgroups in $\mathbb{Z}$.

Now one observes that the left-orders of \(H_\infty\) are in bijective correspondence with sequences \((\epsilon_i) \in \{0, 1\}^\mathbb{N}\)
that encode the signs of the generators: for example we can set
\(x_i > id\) if and only if \(\epsilon_i = 1\). It is not hard to see that the conjugacy
action of \(H_\infty\) on the set \(\LO(H_\infty)\) yields an action of \(H_\infty\) on \(\{0, 1\}^\mathbb{N}\) given by \(x_j \cdot (\epsilon_i)\)
is the same as \((\epsilon_i)\) in every entry except the \((j - 1)\)-th position, which has been
changed.
Two left-orderings of \(H_\infty\) are in the same orbit if and only if their
corresponding sequences in \(\{0, 1\}^\mathbb{N}\)
are eventually equal.

Thus every relatively convex proper subgroup $C \leq H_{\infty}$ is a Tararin group\footnote{Recall that a left-orderable group is \emph{Tararin} if it admits exactly finitely many left-orders.}, so $\LO(C)$ is finite, and yet $E_{\mathsf{lo}}(H_{\infty})$ is not smooth.

\subsection{Non-smoothness and isolated points}

Recall that a positive cone $P$ determines a \emph{Conradian} left-ordering of $G$ if $g, h \in P$ implies $g^{-1}hg^2 \in P$ for all $g, h \in G$~\cite{Nav10}.  Given a positive cone $P \in \LO(G)$, the \emph{Conradian soul} of $<_P$ is the (unique) subgroup $C \leq G$ that is maximal with respect to the conditions:
\begin{enumerate}
    \item $C$ is convex relative to the ordering $<_P$ of $G$, and
    \item $P \cap C$ determines a Conradian left-ordering of $C$.
\end{enumerate}

We recall the following theorem proved via different techniques in both \cite{Cla10} and \cite{Nav10}.

\begin{theorem}
\label{nontrivialsoul} 
If the Conradian soul of $<_P$ is trivial, then $P$ is condensed.
\end{theorem}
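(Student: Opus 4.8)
The plan is to translate the statement ``$P$ is condensed'' into the combinatorial statement that every basic open neighborhood of $P$ contains a distinct conjugate of $P$, and then to produce such a conjugate by exploiting the triviality of the Conradian soul. Recall that a basic open neighborhood of $P$ is determined by finitely many group elements $g_1,\dots,g_n$ together with the requirement that each $g_i$ have a prescribed sign. So I would fix such a finite list and seek $f\in G$ with $fPf^{-1}\neq P$ and $fPf^{-1}$ agreeing with $P$ on all of $g_1,\dots,g_n$. Concretely, it suffices to find $f$ such that $g_i\in fPf^{-1}\iff g_i\in P$ for each $i$, i.e.\ $f^{-1}g_if\in P\iff g_i\in P$, while $f^{-1}(\cdot)f$ nontrivially permutes the positive cone somewhere.

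First I would recall the structural dichotomy underlying the Conradian soul: by the Conrad--Navas theory, if the Conradian soul of $<_P$ is trivial, then the dynamical realization (or the action of $G$ on the ordered set via Proposition~\ref{prop : equivalent defs LO}\ref{item : equivalent defs LO 3}) has no ``Conradian'' pieces, which forces a kind of proximal or crossed behavior. The key classical fact I would invoke is that triviality of the Conradian soul means $<_P$ is an accumulation point of its own conjugates -- equivalently, that one can find elements acting with crossings/resilient dynamics so that conjugating $P$ by appropriate group elements changes the ordering on arbitrarily large finite sets in a controlled way while leaving a prescribed finite set fixed. The precise tool is that a left-ordering with trivial Conradian soul admits, for each finite $F\subseteq G$, an element $f$ whose conjugation action fixes the $P$-signs on $F$ yet genuinely moves $P$; this is exactly the content extracted from the dynamical arguments of Navas and Clay.

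The key steps in order: (1) reduce to the finite-neighborhood formulation above; (2) pass to the dynamical realization of $<_P$ on $\mathbb{R}$ (or the relevant ordered set), where triviality of the Conradian soul translates into the absence of a nontrivial convex ``Conradian'' subgroup at the origin, hence the action near the fixed point associated to $P$ exhibits crossings; (3) use these crossings to build, for any finite $F$, a group element $f$ whose action approximates the identity on the finitely many points coding the signs of elements of $F$ while being globally nontrivial, so that $f^{-1}g_if$ and $g_i$ lie on the same side of the origin for each $g_i\in F$; (4) conclude $fPf^{-1}\in\bigcap_{i}U_{g_i}^{\pm}$ and $fPf^{-1}\neq P$, exhibiting $P$ as an accumulation point of $\Orb(P)$.

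The main obstacle I anticipate is step~(3): producing a single conjugating element $f$ that simultaneously preserves the signs of all of $g_1,\dots,g_n$ yet genuinely alters the ordering. Preserving finitely many prescribed signs is a closed/finite-codimension constraint, and the nontriviality of the move must be arranged within that constraint; this is where the precise dynamical input from the trivial-Conradian-soul hypothesis is essential, since it guarantees enough ``room'' (crossings or approximation by conjugates) to satisfy the finitely many sign constraints while still perturbing the order elsewhere. I would lean on the dynamical realization and the resilience/crossing phenomena established in \cite{Nav10} and \cite{Cla10} to supply exactly this $f$, rather than attempt a purely algebraic construction.
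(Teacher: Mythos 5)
The paper does not actually prove Theorem~\ref{nontrivialsoul}: it is recalled as a known result, established ``via different techniques'' in \cite{Cla10} and \cite{Nav10}, and no argument is given in the text. So the only meaningful comparison is between your outline and those cited proofs; your sketch follows the dynamical route of \cite{Nav10} (dynamical realization plus crossings). Your step~(1) is fine: condensation of $P$ is exactly the statement that every basic open neighborhood $\bigcap_{i=1}^n U_{g_i}$ of $P$ contains a conjugate $fPf^{-1}\neq P$, and this reformulation is how the cited proofs begin.

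The difficulty is that, as written, your argument is circular at exactly the point where the mathematical content lives. In your second paragraph the ``key classical fact'' you propose to invoke --- that triviality of the Conradian soul means $<_P$ is an accumulation point of its own conjugates --- is verbatim the statement you are trying to prove, not a tool available for proving it. Correspondingly, step~(3), producing a single element $f$ that preserves the finitely many prescribed signs while genuinely moving $P$, is precisely the part of Navas's proof that requires work: one must show that a trivial Conradian soul forces the existence of crossings (resilient configurations) arbitrarily close to the reference point $t_0$ of the dynamical realization, and then use such a crossing to manufacture $h\in G$ with $\rho(h)(t_0)\neq t_0$ but with $\rho(h)(t_0)$ closer to $t_0$ than every point $\rho(g_i)(t_0)$, so that the conjugate ordering agrees with $<_P$ on $g_1,\dots,g_n$ yet differs from it. You name this step, correctly flag it as the main obstacle, and then defer it wholesale to \cite{Nav10} and \cite{Cla10}. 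If citation of those papers is admissible, your proof collapses to the same citation the paper itself makes (so nothing is lost, but nothing is gained); if a self-contained argument was intended, the core implication --- trivial soul $\Rightarrow$ crossings near $t_0$ $\Rightarrow$ existence of the required $f$ --- is entirely missing.
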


Thus if $G$ admits a positive cone $P$ having trivial Conradian soul, then $E_{\mathsf{lo}}(G)$ is not smooth.

As a consequence of Theorem \ref{nontrivialsoul}, every isolated point in $\LO(G)$ (that is, $P \in \LO(G)$ such that there exist $g_1, \dots, g_n \in G \setminus \{ id \}$ with $\{P \} = \bigcup_{i=1}^n U_{g_i}$) must have nontrivial Conradian soul, as isolated points cannot be condensed.  In fact, Navas shows much more:

\begin{theorem}\cite[Proposition 4.9]{Nav10}
Suppose that $P$ is an isolated point and let $C \leq G$ its Conradian soul. Then $C$ is a Tararin group, so $\LO(C)= \{ Q_1, \ldots, Q_{2^k}\}$ for some $k >0$, moreover,  if $G$ is not a Tararin group then there exists $i \in \{ 1, \ldots, 2^k\}$ such that $(P \setminus C) \cup Q_i$ is a condensed point of $\LO(G)$.
\end{theorem}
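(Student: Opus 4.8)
The plan is to fix an isolated point $P \in \LO(G)$ with Conradian soul $C$, and carefully study how the $G$-conjugation action behaves on positive cones that agree with $P$ outside $C$. First I would invoke the preceding theorem of Navas to record that $C$ is a Tararin group, so $\LO(C) = \{Q_1, \ldots, Q_{2^k}\}$ is finite. Since $P$ is isolated, there are finitely many witnesses $g_1, \ldots, g_n \in G \setminus \{id\}$ with $\{P\} = \bigcap_{i=1}^n U_{g_i}$; I would separate these witnesses into those lying in $C$ and those lying in $G \setminus C$, as in the proof of Proposition~\ref{prop : convex smooth}. The key structural idea is that for each $i$, the cone $P_i := (P \setminus C) \cup Q_i$ is a legitimate positive cone of $G$ (this uses that $C$ is convex relative to $<_P$, so replacing the restriction to $C$ by any other left-ordering of $C$ yields a well-defined lexicographic cone, exactly as in Proposition~\ref{prop : convex}).

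Next I would try to locate the condensed point among the $P_i$. The natural strategy is to argue by contradiction: if \emph{no} $P_i$ were condensed, then each $P_i$ would be isolated in $\LO(G)$ (since a non-condensed point in a perfect-enough setting, or more precisely a point whose orbit does not accumulate on it, is isolated once we know $\LO(G)$ has the right structure near these cones). I would aim to show that isolation of every $P_i$ forces the restriction map to exhibit $C$ itself as relatively convex with $\LO(C)$ realized as isolated cones in a way incompatible with $G$ not being Tararin. Concretely, I expect to use the Conradian-soul machinery: Navas' analysis shows that near an isolated point the ordering is controlled by its soul, and the dynamical picture is that conjugation by suitable elements permutes or approximates the cones $Q_i$ inside $C$ while the behavior on $G \setminus C$ is rigid. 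The hypothesis that $G$ is not Tararin is what guarantees there are ``enough'' conjugating elements acting nontrivially to produce accumulation.

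The central computation mirrors Proposition~\ref{prop : convex smooth}: I would fix a basic open neighborhood $\bigcap_{i} U_{c_i} \cap \bigcap_j U_{h_j}$ of the candidate cone $P_{i_0}$, with $c_i \in C$ and $h_j \in G \setminus C$, and seek a conjugating element $g \in G$ such that $g P_{i_0} g^{-1}$ differs from $P_{i_0}$ yet remains in this neighborhood. The claim from Proposition~\ref{prop : convex smooth} handles the $h_j$ coordinates: conjugation by elements of $C$ fixes membership of $G \setminus C$ elements in the right neighborhoods. The work is then entirely on the $C$-coordinates, where I must produce a conjugator realizing a nontrivial but locally-agreeing change among the finitely many cones $Q_1, \ldots, Q_{2^k}$ — and this is where the hypothesis that $G \not\cong$ Tararin, together with the explicit structure of Tararin groups (a chain of convex subgroups with $\mathbb{Z}$ quotients), must be leveraged to guarantee such conjugators exist for at least one index $i_0$.

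The main obstacle I anticipate is pinning down \emph{which} index $i_0$ works and proving the existence of the required conjugators. Unlike Proposition~\ref{prop : convex smooth}, where condensation of $Q$ in $\LO(C)$ is assumed outright, here $\LO(C)$ is \emph{finite} so no $Q_i$ can be a condensation point \emph{within} $\LO(C)$; the condensation must instead come from conjugators in $G \setminus C$ that move the $C$-part. Thus the heart of the argument is showing that when $G$ is not Tararin, there exist elements of $G$ that conjugate $P_{i_0}$ to cones agreeing with it on any prescribed finite set while altering the soul — essentially an extension-of-orderings / dynamical-realization argument drawn from Navas~\cite{Nav10}. I expect the cleanest route is to cite and adapt the detailed structure theory in the proof of \cite[Proposition~4.9]{Nav10} rather than rebuild it, combining it with the convexity claim above to conclude that the selected $(P \setminus C) \cup Q_i$ accumulates on its own orbit.
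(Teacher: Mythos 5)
The first thing to note is that the paper does not prove this statement at all: it is quoted directly from Navas, with the citation \cite[Proposition~4.9]{Nav10} standing in for the proof. So the real question is whether your sketch amounts to an independent argument, and it does not --- there are two genuine gaps.

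First, your contradiction framework rests on the implication ``if no $P_i$ is condensed, then each $P_i$ is isolated,'' which is false. Non-condensed only means that $\Orb(P_i)$ does not accumulate at $P_i$; it says nothing about other points of $\LO(G)$ accumulating there. For instance, in $\LO(\mathbb{Z}^2)$ the conjugation action is trivial, so no point is condensed, yet $\LO(\mathbb{Z}^2)$ is a Cantor set and no point is isolated. Nothing in your setup rules out this intermediate behavior near the cones $P_i$ (note that Proposition~\ref{prop : smooth acc points} is a global equivalence about smoothness, not a pointwise dichotomy between ``condensed'' and ``isolated''), so the reductio never gets off the ground. Second, both substantive assertions of the theorem are deferred rather than proved: the claim that the soul $C$ is Tararin does not follow from Theorem~\ref{nontrivialsoul}, which only yields that $C$ is nontrivial when $P$ is isolated; and the existence of conjugators that alter the soul while agreeing on a prescribed finite set --- the heart of the matter, as you yourself identify --- is handled by your plan to ``cite and adapt the detailed structure theory in the proof of \cite[Proposition~4.9]{Nav10}.'' Since the statement being proved \emph{is} \cite[Proposition~4.9]{Nav10}, that is circular as a proof; it reduces your argument to the same citation the paper itself makes. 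What you do establish correctly --- that each $(P\setminus C)\cup Q_i$ is a genuine positive cone, via convexity of $C$ exactly as in Proposition~\ref{prop : convex} --- is a peripheral ingredient, not the core of Navas's result, which requires his dynamical analysis of orderings near an isolated point.
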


As an immediate consequence, we apply Proposition \ref{prop : smooth acc points} and observe:

\begin{corollary}
Suppose that $G$ is not a Tararin group.  If $\LO(G)$ contains an isolated point, then $E_{\mathsf{lo}}(G)$ is not smooth.
\end{corollary}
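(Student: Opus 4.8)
The plan is to chain together the two results that immediately precede this corollary. The hypothesis gives us an isolated point $P \in \LO(G)$, and we are told that $G$ is not a Tararin group, which is precisely the hypothesis needed to invoke Navas's theorem (\cite[Proposition 4.9]{Nav10}) in its strong form. So the first step is simply to apply that proposition to the isolated point $P$: it produces the Conradian soul $C \leq G$, which is a Tararin group with $\LO(C) = \{Q_1, \ldots, Q_{2^k}\}$, together with an index $i$ such that the positive cone $(P \setminus C) \cup Q_i$ is a condensed point of $\LO(G)$.

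Once we have a condensed point in hand, the second step is to feed it into Proposition~\ref{prop : smooth acc points}, which asserts the equivalence of smoothness of $E^X_G$ with the absence of condensed points. Since we have exhibited a condensed point in $\LO(G)$, condition \ref{item : 2} of that proposition fails, and therefore condition \ref{item : 1} fails as well. Concretely, $\LO(G)$ is a Polish $G$-space under the conjugacy action (as established in Section~3), so the proposition applies directly and yields that $E_{\mathsf{lo}}(G)$ is not smooth.

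I expect no genuine obstacle here, since both ingredients are quoted as black boxes from the surrounding text; the corollary is essentially a two-line composition. The only point requiring a word of care is confirming that the hypotheses of Navas's proposition are met — namely that $G$ is genuinely not Tararin, which is exactly our standing assumption, and that $P$ is isolated, which is given. There is nothing to verify about the internal structure of $C$ or the choice of $i$; we only need the existence of the resulting condensed point, and the rest is immediate from Proposition~\ref{prop : smooth acc points}.

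\begin{proof}
Let $P \in \LO(G)$ be an isolated point. Since $G$ is not a Tararin group, we may apply \cite[Proposition~4.9]{Nav10} to obtain the Conradian soul $C \leq G$ of $<_P$, which is a Tararin group with $\LO(C) = \{Q_1, \ldots, Q_{2^k}\}$, and an index $i \in \{1, \ldots, 2^k\}$ such that $(P \setminus C) \cup Q_i$ is a condensed point of $\LO(G)$. Thus $\LO(G)$ contains a condensed point, so by Proposition~\ref{prop : smooth acc points} the equivalence relation $E_{\mathsf{lo}}(G)$ is not smooth.
\end{proof}
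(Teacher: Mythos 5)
Your proof is correct and follows exactly the paper's intended argument: the paper presents this corollary as an "immediate consequence" of Navas's Proposition~4.9 (which supplies the condensed point $(P \setminus C) \cup Q_i$) combined with Proposition~\ref{prop : smooth acc points}. No gaps; your verification of the hypotheses is exactly what is needed.
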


\subsection{The free part of \(\LO(G)/G\)}

For a left-orderable group \(G\) denote by \(\LO((G))\) the \emph{free part of its conjugacy action}. That is, we set
\[
\LO((G)) = \{ P \in \LO (G): \forall g\neq 1 (g^{-1} Pg \neq P ) \}.
\]

Note that for any \(P\in \LO ((G))\), the orbit \(\Orb(P)\) is infinite. 

\begin{proposition}
\label{prop : free part}
If \(\LO((G))\neq \emptyset\), then \(E_\mathsf{lo}(G)\) is not smooth.
\end{proposition}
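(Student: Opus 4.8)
The plan is to show that $\LO((G)) \neq \emptyset$ forces the existence of a condensed point, so that non-smoothness follows immediately from Proposition~\ref{prop : smooth acc points}. The guiding intuition is that generic ergodicity should be the right mechanism: if we can exhibit a point with dense orbit (or more precisely, produce a condensed point directly), we are done. The freeness hypothesis guarantees $\Orb(P)$ is infinite for every $P \in \LO((G))$, which is encouraging, but infinitude of an orbit alone does not yield an accumulation point in a compact space unless we rule out the orbit being a closed discrete set.

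The approach I would take is to argue by contradiction using the dichotomy of Proposition~\ref{prop : smooth acc points} together with the structure of the free part. Suppose $E_\mathsf{lo}(G)$ were smooth; then by that proposition there are no condensed points, so for \emph{every} $P \in \LO(G)$ the orbit $\Orb(P)$ carries the discrete subspace topology. First I would fix some $P_0 \in \LO((G))$ and consider its orbit closure $Y = \overline{\Orb(P_0)}$, a compact $G$-invariant subspace in which $\Orb(P_0)$ is dense. The action $G \curvearrowright Y$ is then generically ergodic, so by the corollary to generic ergodicity there must be an isolated point $y_0 \in Y$; as in the proof of Proposition~\ref{prop : smooth acc points}, $y_0$ cannot lie in $Y \setminus \Orb(P_0)$ (those points are accumulation points of the dense orbit), hence $y_0 \in \Orb(P_0)$, and by continuity of the $G$-action every point of $\Orb(P_0)$ is isolated in $Y$. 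This means $\Orb(P_0)$ is open in $Y$; since $Y$ is the closure of $\Orb(P_0)$ and $\Orb(P_0)$ is both open and dense in $Y$, I would argue $\Orb(P_0) = Y$, so the orbit is already closed in $\LO(G)$ and compact. A countable compact set is finite, contradicting that $\Orb(P_0)$ is infinite by the observation preceding the proposition.

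The step I expect to be the main obstacle, and the one that needs the most care, is the passage from ``each orbit point is isolated in $Y$'' to ``$\Orb(P_0)$ is closed, hence equal to $Y$.'' The subtle point is that an infinite orbit of isolated points in a compact space can still accumulate to points outside itself; what must be leveraged is precisely that the accumulation points would lie in $Y \setminus \Orb(P_0)$ and, being limits of the orbit, would themselves be condensed or at least non-isolated in $Y$, contradicting smoothness applied to the whole space rather than just to $Y$. An alternative, cleaner route that sidesteps this delicacy is to invoke the second half of Proposition~\ref{prop : smooth acc points} more directly: if no point is condensed then the reduction $P \mapsto \overline{\Orb(P)}$ witnesses $E_\mathsf{lo}(G) \leq_B\,=_{F(X)}$, and one checks that for $P_0 \in \LO((G))$ the infinite orbit cannot be simultaneously discrete and have compact (hence finite) closure. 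I would present whichever formulation keeps the compactness contradiction most transparent, since compactness of $\LO(G)$ is exactly what converts ``infinite plus discrete'' into the desired contradiction.
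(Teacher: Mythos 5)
Your reduction to Proposition~\ref{prop : smooth acc points} is the right framing, and your opening moves (pass to $Y=\overline{\Orb(P_0)}$, use generic ergodicity to find an isolated point, locate it in $\Orb(P_0)$, spread isolation around the orbit by continuity) correctly reproduce the first half of the proof of that proposition. But the gap comes exactly where you anticipate it, and it is not repairable by the means you suggest. Open-plus-dense does not give $\Orb(P_0)=Y$, and your fallback claim---that accumulation points $q\in Y\setminus\Orb(P_0)$ would be ``condensed or at least non-isolated in $Y$, contradicting smoothness''---is not a contradiction: smoothness forbids only condensed points, i.e.\ points accumulating on \emph{their own} orbits, and such a $q$ is an accumulation point of $\Orb(P_0)$, not necessarily of $\Orb(q)$. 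The same confusion affects your alternative route: a countable compact set need not be finite (consider $\{1/n : n\geq 1\}\cup\{0\}$); only compact \emph{discrete} spaces are finite, and it is the orbit, not its closure, that you know to be discrete.

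In fact, no argument at this level of abstraction can succeed, because ``free part nonempty $\Rightarrow$ non-smooth'' is false for general compact Polish $G$-spaces. Take $G=\mathbb{Z}$ acting by the shift $\sigma$ on $\{0,1\}^{\mathbb{Z}}$, let $x$ be the characteristic function of $\{0\}$, and let $Y=\Orb(x)\cup\{0^\infty\}$ be its orbit closure. The action on $Y$ is free at $x$, and $\Orb(x)$ is infinite, discrete, open and dense in $Y$, accumulating precisely on the fixed point $0^\infty$; yet $E^Y_{\mathbb{Z}}$ has exactly two Borel classes and is therefore smooth. (This also witnesses concretely that a dense orbit of isolated points can be a proper subset of $Y$.) So any correct proof must use the specific structure of positive cones, and that is what the paper does: given $P\in\LO((G))$ and a basic neighborhood $\bigcap_{i=1}^n U_{g_i}$ of $P$, list the $g_i$ so that $g_1<_P\dotsb<_P g_n$ and conjugate by the least element $g_1$. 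Since $P$ is a subsemigroup, $g_1^{-1}g_ig_1\in P$ for every $i$ (for $i\geq 2$ one has $g_1^{-1}g_i\in P$, hence $g_1^{-1}g_ig_1\in P$), so $g_1Pg_1^{-1}$ lies in the same basic neighborhood, and freeness gives $g_1Pg_1^{-1}\neq P$. Thus $P$ itself is condensed; it is this direct order-theoretic construction, not a topological dichotomy, that closes the argument, and it is the idea missing from your proposal.
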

\begin{proof}
Suppose \(P\in \LO((G))\).  By Proposition~\ref{prop : smooth acc points} it suffices to show that \(P\) is condensed. Let \(P\in  \bigcap^n_{i=1} U_{g_i}\), which is a basic open neighborhood of \(P\). And assume that \(g_1<_P\dotsb<_Pg_n\) without loss of generality. Then,  we claim that
\[
 g_1^{-1} g_i g_1 \in P
\]
for \(i=1,\dotsc, n\).  For \(i=1\), it follows from the assumption that \(P\in U_{g_1}\).  For \(i\geq 2\), \(g_1<_P g_i\) implies that \(1 <_Pg_1^{-1} g_i\), whence \(1 <_Pg_1^{-1} g_ig_1\) because \(P\) is a semigroup.
Therefore,  for \(i=1,\dotsc, n\), we have 
\[
  g_i  \in g_1Pg_1^{-1}.
\]
This shows that \(g_1 P g_1^{-1} \in \Orb(P)\cap \bigcap^n_{i=1} U_{g_i} \).
Since \(P\in \LO((G))\), we conclude that \(g_1^{-1} P g_1 \neq P\), therefore \(P\) is condensed.
\end{proof}

\subsection{Baumslag-Solitar groups}
Fix an integer \(n\).  The Baumslag–Solitar group \(\BS(1, n)\) is given by the presentation \(\langle a, b \mid bab^{-1} = a^n\rangle\).  There is an injective homomorphism \(\rho\colon \BS(1,n) \to \Homeo_{+}(\mathbb{R})\) defined by setting
\begin{align*}
\rho(a)(x) &= x+1,\\
\rho(b)(x) &= nx.
\end{align*}

The following construction of left-orderings on \(\BS(1,n)\) is due to Smirnov~\cite{Smi66}.  For any \(\alpha\in \mathbb{R}\setminus \mathbb{Q}\) we can define a corresponding \(P_\alpha \in \LO\big( \BS(1, n) \big)\) by declaring
\[
g\in P_\alpha \iff \rho(g)(\alpha) > \alpha.
\]
Note that the map \(\mathbb{R}\setminus \mathbb{Q} \to \LO(\BS(1,n)), \alpha\mapsto P_\alpha\) is injective. In fact,
for different irrational numbers \(\alpha<\beta\), we can choose some \(g \in \BS(1, n)\) such that \[\rho(g) = n^r x + \frac{s}{n^t}\] with \(r>0\), and having fixed point \(q = \frac{s}{n^t(1-n^r)}\) strictly between \(\alpha\) and \(\beta\). This choice is always possible because the range of \(\rho\) consists of precisely those functions of the form \(f(x) = n^r x + \frac{s}{n^t}\) with \(r,s,t\in \mathbb{Z}\). Moreover, we can always choose \(t\) and \(r\) so that the denominator
\(D=n^t(1-n^r)\) satisfies  \(1/|D| < \beta - \alpha\), therefore the interval \((\alpha, \beta)\) must contain a point of the form \(\frac{m}{D}\), for some \(m\in\mathbb{Z}\). Then, since \(r>0\), we have $\rho(g)(\alpha) < \alpha$ for all $\alpha < q$, and $\rho(g)(\beta) > \beta$ for all $\beta >q$. This means that \(g\in P_\beta \setminus P_\alpha\), showing that the function \(\alpha\mapsto P_\alpha\) is injective.

It is well known that the conjugacy action \(\BS(1, n)\curvearrowright\LO(\BS(1,n))\) is not generically ergodic, however with our new technique we can easily prove the following:

\begin{corollary}
\label{cor : BS(1,n)}
For \(n> 1\)  and \(G=\BS(1, n)\),  the conjugacy relation \(E_\mathsf{lo}(G)\) is not smooth.  
\end{corollary}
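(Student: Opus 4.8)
The plan is to reduce everything to Proposition~\ref{prop : free part}: since that result tells us $\LO((G)) \neq \emptyset$ already forces non-smoothness of $E_\mathsf{lo}(G)$, it suffices to exhibit a single positive cone lying in the free part of the conjugacy action. I claim that \emph{every} Smirnov cone $P_\alpha$ with $\alpha \in \mathbb{R} \setminus \mathbb{Q}$ does the job, so the entire argument amounts to checking that such a $P_\alpha$ has trivial stabilizer under conjugation.

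The key step I would carry out first is to identify how conjugation acts on the Smirnov cones. Fix $h \in G$ and an irrational $\alpha$. Because $\rho$ is a homomorphism into $\Homeo_{+}(\mathbb{R})$ we have $\rho(h^{-1} g h) = \rho(h)^{-1} \rho(g) \rho(h)$ for all $g$, and since $\rho(h)$ is order-preserving, the inequality $\rho(h^{-1} g h)(\alpha) > \alpha$ holds if and only if $\rho(g)\big(\rho(h)(\alpha)\big) > \rho(h)(\alpha)$. Setting $\beta = \rho(h)(\alpha)$, this says exactly that $g \in h P_\alpha h^{-1}$ if and only if $g \in P_\beta$, so that
\[
h P_\alpha h^{-1} = P_{\rho(h)(\alpha)}.
\]
Here one must note that $\beta = \rho(h)(\alpha)$ is again irrational, which is immediate since $\rho(h)$ has the form $n^r x + \tfrac{s}{n^t}$ with integer coefficients, so that $P_\beta$ is a genuine Smirnov cone to which the injectivity statement applies.

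The remaining point is that no nontrivial element of $G$ fixes an irrational: each $\rho(h)$ with $h \neq id$ is a nonidentity affine map $x \mapsto n^r x + \tfrac{s}{n^t}$, which is either a nontrivial translation (when $r = 0$) with no fixed point, or (when $r \neq 0$) has a unique fixed point equal to the rational number $\tfrac{s}{n^t(1 - n^r)}$; in either case $\rho(h)(\alpha) \neq \alpha$ for every irrational $\alpha$. Combining this with the conjugation identity and the injectivity of $\alpha \mapsto P_\alpha$ established above, I would conclude that $h P_\alpha h^{-1} = P_{\rho(h)(\alpha)} \neq P_\alpha$ for all $h \neq id$, so $P_\alpha \in \LO((G))$; invoking Proposition~\ref{prop : free part} then finishes the proof. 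The only real content lies in the conjugation identity $h P_\alpha h^{-1} = P_{\rho(h)(\alpha)}$, and I do not expect a genuine obstacle beyond it, since the rest reduces to the elementary observation that the fixed points of the affine maps in the range of $\rho$ are rational.
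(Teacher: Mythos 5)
Your proposal is correct and follows essentially the same route as the paper: both reduce to Proposition~\ref{prop : free part} via the conjugation identity \(hP_\alpha h^{-1} = P_{\rho(h)(\alpha)}\), the injectivity of \(\alpha \mapsto P_\alpha\) on the irrationals, and the observation that nontrivial elements of the image of \(\rho\) fix no irrational point. Your write-up actually supplies two details the paper leaves implicit --- the verification of the conjugation identity and the check that \(\rho(h)(\alpha)\) remains irrational --- so nothing is missing.
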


\begin{proof}
By Proposition~\ref{prop : free part} is suffices to prove that for any \(\alpha\in \mathbb{R}\setminus \mathbb{Q}\), the positive cone \(P_\alpha\) belongs the free part of the conjugacy action. To see this, assume that \(hP_{\alpha}h^{-1}= P_\alpha\).  One checks that $hP_{\alpha}h^{-1} = P_{\rho(h)(\alpha)}$. Therefore, we have \(P_{\rho(h)(\alpha)}= P_\alpha\) and, since the map \(\alpha\mapsto P_\alpha\) is injective,  it yields that \(\rho(h)(\alpha)=\alpha\). However, for every \(h\neq 1\), the order-preserving homeomorphism \(\rho(h)\) has only rational fixed points. Therefore,  it must hold that \(h\) is the group identity as desired.
\end{proof}

It is worth pointing out that Corollary~\ref{cor : BS(1,n)} also follows from Proposition~\ref{prop : smooth acc points} and the work of Rivas and Tessera~\cite[Proposition~2.12]{RivTes}. However, our analysis of
the free part of \(\BS(1,2) \curvearrowright \LO\big(\BS(1,2)\big)\) allows us to further settle the Borel complexity of \(E_\mathsf{lo}(\BS(1,2))\).
Recall that an equivalence relation \(E\) is \emph{hyperfinite} if it is the union of an increasing sequence of finite Borel equivalence relations.

\begin{corollary}
\label{cor : BS(1,2)}
    \(E_\mathsf{lo}(\BS(1,2))\) is hyperfinite.
\end{corollary}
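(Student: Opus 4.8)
Show that $E_\mathsf{lo}(\BS(1,2))$ is hyperfinite.

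The plan is to exploit the dynamical description of the free part $\LO((\BS(1,2)))$ established above, where each $P_\alpha$ for $\alpha \in \R \setminus \Q$ arises from the action $\rho$ on $\R$ and the conjugacy action translates into $h \cdot P_\alpha = P_{\rho(h)(\alpha)}$. The first thing I would do is analyze the structure of $\LO(\BS(1,2))$ as a whole, separating it into the free part and its complement. On the free part, the map $\alpha \mapsto P_\alpha$ is an injective, $\BS(1,2)$-equivariant map from $\R \setminus \Q$ (with the $\rho$-action) into $\LO(\BS(1,2))$, so $E_\mathsf{lo}$ restricted to the free part is Borel reducible to the orbit equivalence relation of $\BS(1,2) \curvearrowright \R \setminus \Q$ via $\rho$. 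The key observation is that $\rho(\BS(1,2))$ consists of the affine maps $x \mapsto 2^r x + s/2^t$, i.e.\ it embeds into the group of dyadic affine transformations of $\R$; its orbit equivalence relation on the irrationals should be hyperfinite because the acting group is (metabelian, hence) amenable, or more directly because one can exhibit it as an increasing union of finite equivalence relations coming from the dyadic-rational translation and scaling structure.

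The main technical steps, in order, are as follows. First, handle the complement $\LO(\BS(1,2)) \setminus \LO((\BS(1,2)))$: these are the cones with nontrivial stabilizer, and I expect this set to consist of countably many orbits built from the rational fixed points of $\rho(h)$ together with finitely many ``boundary'' orderings, so that $E_\mathsf{lo}$ restricted there is smooth, hence hyperfinite. Second, on the free part, use equivariance to reduce to the orbit relation $E$ of the countable amenable group $\rho(\BS(1,2))$ acting on $\R \setminus \Q$, and invoke the theorem of Connes--Feldman--Weiss (every Borel orbit equivalence relation of a countable amenable group is hyperfinite), or construct the hyperfinite exhaustion by hand using the partial order from the real line: the $\BS(1,2)$-action preserves the orientation of $\R$, so each orbit inherits a $\Q$-like linear order, and one can build increasing finite equivalence relations $F_m$ by grouping points whose orbit-connections use only generators of bounded complexity. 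Third, combine the two pieces: $\LO(\BS(1,2))$ is the union of a Borel free part and a Borel smooth part, each of which carries a hyperfinite restriction of $E_\mathsf{lo}$, and hyperfiniteness is closed under such countable unions of invariant Borel pieces.

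The step I expect to be the main obstacle is verifying hyperfiniteness on the free part rigorously rather than merely invoking amenability. The cleanest route is to note that the orientation-preserving action of $\BS(1,2)$ on $\R$ gives each orbit a canonical linear ordering induced by the usual order on $\R$, which is itself a Borel assignment; since every orbit is countable and linearly ordered in a $\BS(1,2)$-equivariant Borel way, the induced orbit equivalence relation is Borel reducible to a Borel action of $\Z$ (by using a Borel ``successor'' along each orbit), and is therefore hyperfinite by the Slaman--Steel / Weiss theorem characterizing hyperfiniteness of $\Z$-actions. Making the successor map Borel and well-defined on the irrationals — ensuring there are no gaps or accumulation problems caused by rational fixed points — is the delicate point, and I would resolve it by working with the explicit affine form $2^r x + s/2^t$ to control how the dyadic structure interleaves with the real order.
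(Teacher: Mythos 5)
Your overall decomposition (a countable "exceptional" part plus a free part realized by the Smirnov orderings) matches the paper's strategy, but the proposal has a fatal gap at exactly the step you flag as the main obstacle: hyperfiniteness on the free part. The Connes--Feldman--Weiss theorem is a \emph{measure-theoretic} statement; it gives hyperfiniteness $\mu$-almost everywhere for every Borel probability measure, not Borel hyperfiniteness. Whether every Borel action of a countable amenable group induces a hyperfinite orbit equivalence relation is precisely Weiss's long-standing open problem (which this very paper mentions in its final section), so amenability of $\BS(1,2)$ cannot be invoked. Your backup construction also fails: the orbit of an irrational $\alpha$ under the group of dyadic affine maps $x \mapsto 2^r x + s/2^t$ is dense in $\R$, hence densely ordered, so there is \emph{no} successor map along orbits, Borel or otherwise, and the Slaman--Steel criterion for $\Z$-actions does not apply. (Merely having a Borel linear order on each class is vacuous --- any countable Borel equivalence relation on $\R$ has that --- the useful hypothesis is that the class orders are discrete, which is exactly what density destroys.) The paper's proof instead cites a recent theorem of Conley, Jackson, Marks, Seward, and Tucker-Drob (proved via Borel asymptotic dimension) asserting that free Borel actions of $\BS(1,2)$ are hyperfinite; some such nontrivial external input is unavoidable here, and it is the reason the paper could state the result only for $n=2$ and left $n>2$ as an open question.

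There is a second, smaller gap. Your claim that the non-free part consists of countably many orbits is only asserted ("I expect\dots"), and your reduction of the free part to the action on $\R \setminus \Q$ is backwards as stated: the injective equivariant map $\alpha \mapsto P_\alpha$ reduces the orbit relation on $\R\setminus\Q$ \emph{into} $E_\mathsf{lo}(\BS(1,2))$, not the other way around. To reverse it you need to know that every positive cone outside a countable set is of Smirnov form $P_\alpha$. Both points rest on a genuine classification result, which the paper supplies by citing Rivas: $\LO(\BS(1,2)) \setminus \{P_\alpha : \alpha \in \R \setminus \Q\}$ is countable, hence the set $Y$ of Smirnov orderings is Borel, conjugation-invariant, and carries a free action with $E_\mathsf{lo}(\BS(1,2)) \sim_B E^Y_{\BS(1,2)}$. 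Without that input (or a proof of it), your two-piece decomposition does not get off the ground.
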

\begin{proof}
Let \(G = \BS(1,2)\).
     and let \(Y=\{P_\alpha\mid \alpha\in \mathbb{R}\setminus \mathbb{Q}\}\) be the set of Smirnov's left-orders.
     Rivas~\cite[Theorem~4.2]{Riv10} establishes that \(\LO(G) \setminus Y\) is countable, therefore \(Y\) is Borel. Moreover, \(Y\) is closed under conjugation.
    Therefore \(Y\) is a free standard Borel \(G\)-space with the standard Borel structure induced by \(\LO(G)\).
    It follows that \(E_\mathsf{lo}(G) \sim_B E^Y_{G}\), and the latter equivalence relation is hyperfinite by \cite[Corollary~7.4]{CJMSTD}.
\end{proof}

To the best of our knowledge Corollary~\ref{cor : BS(1,2)} provides the first example of finitely generated left-orderable group, whose conjugacy equivalence relation is not smooth, yet hyperfinite.

\subsection{Thompson's group \(F\)}

\label{sec : Thompson}

Thompson's group $F$ may be defined by the presentation 
\[ F = \langle a, b \mid [ab^{-1}, a^{-1}ba], [ab^{-1}, a^{-2}ba^2]\rangle.
\]
There is an injective homomorphism $\rho \colon F \rightarrow 
\mathrm{PL}_+([0,1])$ whose image consists of all piecewise linear homeomorphisms of $[0,1]$ having dyadic rational breakpoints, and whose linear segments have slopes that are integral powers of two.

Given an interval $I = [\frac{p}{2^q}, \frac{p+1}{2^q}] \subset [0,1]$, we can define functions $b_I^+, b_I^{-}\colon [0,1] \rightarrow [0,1]$ that lie in the image of $\rho$ and whose support is equal to $I$, as follows.  First, the function $b_I^+$ is given by

 \begin{equation*}
    b_I^+(t) =
    \begin{cases*}
      t & if $0 \leq t \leq \frac{p}{2^q}$ \\
      2t-\frac{p}{2^q}       & if $\frac{p}{2^q} \leq t \leq \frac{p}{2^q} + \frac{1}{2^{q+2}}$\\
       t+\frac{1}{2^{q+2}}       & if $\frac{p}{2^q} + \frac{1}{2^{q+2}} \leq t \leq \frac{p}{2^q} + \frac{1}{2^{q+1}}$\\
          \frac{1}{2}t + \frac{p+1}{q^{q+1}}       & if $\frac{p}{2^q} + \frac{1}{2^{q+1}} \leq t \leq \frac{p+1}{2^q}$\\
          t & if $\frac{p+1}{2^q} \leq t \leq 1$ 
    \end{cases*}
  \end{equation*}
It is clear from this description that $b_I^+$ lies in the image of $\rho$.  On the interval $I$, the graph of $b_I^+$ appears as in Figure \ref{figure}(A).  We can analogously define $b_I^-$, which is the identity outside of $I$ and whose graph appears as in Figure \ref{figure}(B).

\begin{figure}
  \centering
  \subfloat[$b_I^+$]{\includegraphics[scale=0.5]{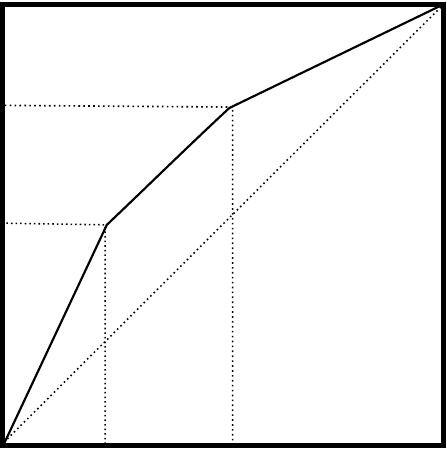}}
  \hspace{3em}
  \subfloat[$b_I^-$]{\includegraphics[scale=0.5]{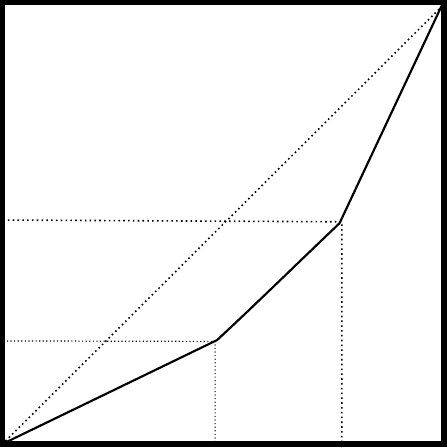}}
  \caption{The graphs of the functions $b_I^+$ and $b_I^+$ on the interval $I$.}
  \label{figure}
\end{figure}

\begin{proposition}
\label{prop:goodfunction}
    Let $S \subset [0,1]$ be finite, and choose $x, y \in [0,1] \setminus S$ with $x \neq y$.  Then there exists $g \in F$ such that $\rho(g)(s) = s$ for all $s \in S$, $\rho(g)(x)>x$, and $\rho(g)(y)<y$.
\end{proposition}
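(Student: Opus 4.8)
The plan is to reduce the statement to a \emph{single-point} problem and then combine two local solutions. First note that every element of $\mathrm{PL}_+([0,1])$ fixes the endpoints $0$ and $1$, so the requirements $\rho(g)(x)>x$ and $\rho(g)(y)<y$ force $x,y\in(0,1)$; we assume this throughout. Since $S$ is finite, $x\neq y$, and $x,y\notin S$, I can choose disjoint closed intervals $A_x\ni x$ and $A_y\ni y$ with $A_x\cap A_y=\emptyset$, with $(A_x\cup A_y)\cap S=\emptyset$, each contained in $(0,1)$ and containing its designated point in its interior. It then suffices to produce $g_x\in F$ supported in $A_x$ with $\rho(g_x)(x)>x$, and $g_y\in F$ supported in $A_y$ with $\rho(g_y)(y)<y$, and to set $g=g_xg_y$.

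The main step is the construction of $g_x$ (the construction of $g_y$ being symmetric). If $x$ is \emph{not} a dyadic rational, then for every $q$ the point $x$ lies in the interior of the unique level-$q$ dyadic interval $I$ containing it, and since $I\subseteq[x-2^{-q},x+2^{-q}]$ we have $I\subseteq A_x$ once $q$ is large; then $b_I^+$ is the identity off $I\subset A_x$ and satisfies $b_I^+(x)>x$, so I take $g_x=\rho^{-1}(b_I^+)$. I expect the \textbf{dyadic case to be the main obstacle}: if $x=a/2^b$ then $x$ is a grid point of every dyadic subdivision, so it is \emph{never} interior to a small standard dyadic interval, and $b_I^+$ cannot be applied. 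Here I will instead write $g_x$ explicitly as a three-piece tree-pair element. On the symmetric window $W=[x-2^{-c},x+2^{-c}]\subset A_x$ with $c>b$, the two halves $[x-2^{-c},x]$ and $[x,x+2^{-c}]$ are standard dyadic intervals. Subdividing the left half into its two dyadic halves gives a domain partition of $W$ into three standard dyadic intervals of lengths $2^{-(c+1)},2^{-(c+1)},2^{-c}$, and I map these order-preservingly and linearly onto the three standard dyadic intervals of lengths $2^{-c},2^{-(c+1)},2^{-(c+1)}$ that partition $W$. The three slopes are then $2,1,\tfrac12$, all powers of two, and all breakpoints are dyadic, so this map lies in the image of $\rho$; it fixes the endpoints of $W$ and sends the middle breakpoint $x$ to $x+2^{-(c+1)}>x$. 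An order-reversed version (swapping the two partitions, or using $b_I^-$ in the non-dyadic case) yields $g_y$ with $\rho(g_y)(y)<y$ supported in $A_y$.

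Finally I combine the local moves. Because $\operatorname{supp}\rho(g_x)\subseteq A_x$ and $\operatorname{supp}\rho(g_y)\subseteq A_y$ are disjoint, $g_x$ and $g_y$ commute, and $g=g_xg_y$ satisfies: $\rho(g)(s)=s$ for all $s\in S$, since $S$ misses both windows; $\rho(g)(x)=\rho(g_x)(x)>x$, since $x\notin A_y$ so $\rho(g_y)$ fixes $x$; and $\rho(g)(y)=\rho(g_y)(y)<y$, since $\rho(g_y)(y)\in A_y$ is disjoint from $A_x$ and hence fixed by $\rho(g_x)$. As $\rho$ is an injective homomorphism, $g\in F$ witnesses the proposition. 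The only genuinely delicate point is the explicit dyadic tree-pair ensuring power-of-two slopes; the rest is routine bookkeeping about disjoint supports.
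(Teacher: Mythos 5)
Your proof is correct, and its skeleton is the same as the paper's: fix disjoint intervals with dyadic endpoints around $x$ and $y$ that avoid $S$, insert a bump supported in each (up at $x$, down at $y$), and take the product. The genuine difference is your dyadic/non-dyadic case split, and it is not pedantry: the paper's proof invokes $b_I^+$ and $b_J^-$ for ``disjoint intervals $I,J$ with dyadic rational endpoints'' containing $x$ and $y$, but $b_I^{\pm}$ was defined only for standard dyadic intervals $I=[p/2^q,(p+1)/2^q]$, and those do not suffice: if $x=a/2^b$ is a dyadic rational, then every sufficiently small standard dyadic interval containing $x$ has $x$ as an endpoint, where $b_I^+$ fixes it (for $x=1/2$ the only standard dyadic interval containing $x$ in its interior is $[0,1]$ itself, which cannot avoid $S$). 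So, read literally, the paper's argument has a hole exactly at dyadic points, repairable by extending $b_I^{\pm}$ to arbitrary intervals with dyadic endpoints; your explicit three-piece map with slopes $2,1,\tfrac12$ is precisely such an extension, so your write-up is the more complete of the two, at the cost of length. You were also right to flag the endpoints: as printed the statement fails for $x$ or $y$ in $\{0,1\}$, since every element of $\rho(F)$ fixes $0$ and $1$, and both your proof and the paper's tacitly assume $x,y\in(0,1)$ --- which is all the subsequent application to $P_e\in\LO((F))$ ever requires.
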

\begin{proof}
    As the dyadic rational numbers are dense, we may choose disjoint intervals $I, J$ with dyadic rational endpoints, satisfying $I \cap S = J \cap S = \emptyset$, with $x \in I$ and $y \in J$. Now set $f = b_I^+ \circ b_J^-$, then $f$ satisfies $f(s) = s$ for all $s \in S$, and $f(x)>x$ while $f(y)<y$.  Moreover, $f$ is in the image of $\rho$, so the proposition follows.
\end{proof}

Fix an enumeration $e\colon \mathbb{N} \rightarrow \mathbb{Q} \cap (0,1)$, writing $e(i) = r_i$.  Every enumeration of $\mathbb{Q}$ can be used to define a positive cone $P_e \subset F$ in the usual way: Given $g \in F$, let $r_i$ denote the first rational number in the enumeration satisfying $\rho(g)(r_i) \neq r_i$.  Then declare $g \in P_e$ if and only if $\rho(g)(r_i) > r_i$.

Theorem~\ref{thm : F} now follows from the following:

\begin{proposition}
For every enumeration $e\colon \mathbb{N} \rightarrow \mathbb{Q} \cap (0,1)$, we have $P_e \in \mathrm{LO}((F))$.
\end{proposition}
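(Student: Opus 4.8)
The plan is to show directly that for every $h \in F \setminus \{1\}$ one has $h P_e h^{-1} \neq P_e$ (equivalently $h^{-1} P_e h \neq P_e$, which is what membership in $\LO((F))$ demands), by producing a single element $g \in F$ lying in exactly one of the two cones. The guiding idea, mirroring the construction of $P_e$ itself, is that membership in a cone is decided by the sign of the displacement at the $e$-first rational that gets moved; so I want to locate the first rational on which $\rho(h)$ acts nontrivially and arrange for $\rho(g)$ to displace that rational and its $\rho(h)$-image in opposite directions.

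First I would record that, since $\rho$ is injective, $h \neq 1$ forces $\rho(h) \neq \mathrm{id}$. The set $\{t \in [0,1] : \rho(h)(t) \neq t\}$ is then a nonempty open set, hence contains a rational, so there is a least index $k$ with $\rho(h)(r_k) \neq r_k$. Set $S = \{r_0, \dots, r_{k-1}\}$, the (possibly empty) initial block of rationals fixed by $\rho(h)$, and put $x = r_k$ and $y = \rho(h)(r_k)$. These are distinct by the choice of $k$, and I would check $y \notin S$: if $\rho(h)(r_k) = r_j$ for some $j < k$, then applying the order-preserving $\rho(h)^{-1}$, which fixes $r_j$, would give $r_k = r_j$, a contradiction. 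Thus $x, y \in (\mathbb{Q} \cap (0,1)) \setminus S$ with $x \neq y$, and Proposition~\ref{prop:goodfunction} supplies $g \in F$ with $\rho(g)(s) = s$ for all $s \in S$, $\rho(g)(r_k) > r_k$, and $\rho(g)(\rho(h)(r_k)) < \rho(h)(r_k)$.

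It then remains to verify that $g$ separates the two cones. For $P_e$ itself: $\rho(g)$ fixes $r_0, \dots, r_{k-1}$ and moves $r_k$, so the $e$-first moved rational for $g$ is exactly $r_k$; since $\rho(g)(r_k) > r_k$, we get $g \in P_e$. For the conjugate, write $g' = h^{-1} g h$, so that $\rho(g') = \rho(h)^{-1}\rho(g)\rho(h)$ and hence $\rho(g')(r_i) \neq r_i$ if and only if $\rho(g)$ moves $\rho(h)(r_i)$. For $i < k$ this image is $r_i \in S$, which $\rho(g)$ fixes; for $i = k$ it is $y$, which $\rho(g)$ moves. Therefore the $e$-first moved rational for $g'$ is again $r_k$, and $\rho(g')(r_k) = \rho(h)^{-1}(\rho(g)(y)) < \rho(h)^{-1}(y) = r_k$, the strict inequality holding because $\rho(h)^{-1}$ is order-preserving and $\rho(g)(y) < y$. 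Hence $g' \notin P_e$, i.e. $g \notin h P_e h^{-1}$, so $g \in P_e \setminus h P_e h^{-1}$ and $h P_e h^{-1} \neq P_e$.

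The main obstacle to anticipate is exactly the case where $\rho(h)$ fixes an initial block of the enumeration, so that a naive comparison at $r_0$ fails; the device of passing to the first genuinely moved rational $r_k$ and absorbing the fixed initial segment into the set $S$ of Proposition~\ref{prop:goodfunction} is what resolves it. The one subtle verification there is that $\rho(h)(r_k)$ escapes $S$, which lets the good function fix all of $S$ while displacing $r_k$ and $\rho(h)(r_k)$ oppositely; everything else is a direct unwinding of the definition of $P_e$.
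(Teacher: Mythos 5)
Your proof is correct and takes essentially the same route as the paper's: both isolate the least index $k$ with $\rho(h)(r_k) \neq r_k$, absorb the fixed initial block into the set $S$, and apply Proposition~\ref{prop:goodfunction} to produce $g \in P_e \setminus hP_eh^{-1}$. Your explicit check that $\rho(h)(r_k) \notin S$ is a hypothesis of Proposition~\ref{prop:goodfunction} that the paper leaves implicit, so including it is a small improvement rather than a deviation.
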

\begin{proof}
    Let $h \in F\setminus\{id\}$ and suppose that $\rho(h)(r_i) = r_i$ for all $i < N$, while $\rho(h)(r_N) \neq r_N$.  Set $S = \{ r_0, \ldots, r_{N-1}\}$, $x = r_N$ and $y=\rho(h)(r_N)$.  Apply Proposition \ref{prop:goodfunction} to arrive at $g \in F$ with $\rho(g)(r_i) = r_i$ for all $i<N$ and $\rho(g)(r_N) >r_N$, so that $g \in P_e$.  On the other hand, $\rho(h^{-1}gh)(r_i) = r_i$ for all $i<N$, while $\rho(g)(\rho(h)(r_N)) < \rho(h)(r_N)$ holds by our choice of $g$, which is equivalent to $\rho(h^{-1}gh)(r_N)<r_N$.  Thus $h^{-1}gh \notin P_e$, meaning $g \notin hP_eh^{-1}$.  Thus $P_e \neq hP_eh^{-1}$.
\end{proof}
From this, we conclude:

\begin{corollary}$E_{\mathrm{lo}}(F)$ is not smooth.\end{corollary}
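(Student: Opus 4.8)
The plan is to deduce the final corollary directly from the proposition that immediately precedes it, which asserts that for every enumeration $e\colon \mathbb{N} \to \mathbb{Q} \cap (0,1)$, the positive cone $P_e$ lies in $\mathrm{LO}((F))$, the free part of the conjugacy action. Once we know $P_e \in \mathrm{LO}((F))$ for even a single enumeration $e$, we have in particular that $\mathrm{LO}((F)) \neq \emptyset$. The corollary is then an immediate application of Proposition~\ref{prop : free part}, which states precisely that a nonempty free part forces $E_\mathsf{lo}(G)$ to fail smoothness. So the logical skeleton is: exhibit a point in the free part (done by the preceding proposition) and invoke Proposition~\ref{prop : free part}.

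In more detail, the first step is to fix any enumeration $e$ of $\mathbb{Q} \cap (0,1)$; such an enumeration exists because this set is countably infinite. The preceding proposition guarantees that the associated Smirnov-style positive cone $P_e$ satisfies $g^{-1} P_e g \neq P_e$ for all $g \neq \mathrm{id}$, which is exactly the defining condition for membership in $\mathrm{LO}((F))$. Hence $\mathrm{LO}((F))$ is nonempty. The second and final step is to apply Proposition~\ref{prop : free part} with $G = F$: since $\mathrm{LO}((F)) \neq \emptyset$, the relation $E_\mathsf{lo}(F)$ is not smooth. This establishes Theorem~\ref{thm : F}.

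I do not anticipate any genuine obstacle in the corollary itself, since all the real work has been front-loaded into the preceding proposition (whose proof in turn rests on Proposition~\ref{prop:goodfunction}, the key geometric fact that one can find an element of $F$ fixing a prescribed finite set while moving two chosen points in opposite directions). The only thing one must be slightly careful about is confirming that the preceding proposition indeed produces a genuine point of the free part rather than merely a condensed point directly; but since that proposition is phrased exactly in terms of $\mathrm{LO}((F))$, the hypothesis of Proposition~\ref{prop : free part} is met verbatim, and no extra argument is needed. One could alternatively bypass $\mathrm{LO}((F))$ and argue that $P_e$ is condensed and then invoke Proposition~\ref{prop : smooth acc points}, but routing through the free part via Proposition~\ref{prop : free part} is the cleaner and intended path.

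Thus the proof reduces to a single sentence: by the preceding proposition $\mathrm{LO}((F)) \neq \emptyset$, so Proposition~\ref{prop : free part} yields that $E_{\mathrm{lo}}(F)$ is not smooth.
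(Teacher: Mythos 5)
Your proof is correct and follows exactly the paper's intended route: the preceding proposition shows $\mathrm{LO}((F)) \neq \emptyset$, and Proposition~\ref{prop : free part} then yields that $E_{\mathrm{lo}}(F)$ is not smooth. This is precisely what the paper does (the phrase ``From this, we conclude'' marks the same one-line deduction), so there is nothing to add.
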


\section{Open problems}

The results of this paper and our previous work produce several techniques to handle the problem whether \(E_\mathsf{lo}(G)\) is smooth. The next degree of Borel complexity is that of hyperfinite equivalence relations that are not smooth. 
Since every Borel equivalence relation with countable classes is hyperfinite on a comeager set, it is generally very difficult to show that a given equivalence relation is hyperfinite. So we pose the following open questions, which are likely to require new techniques.

In a previous draft of this paper we asked whether \(E_{\mathrm{lo}}\big(\BS(1,n)\big)\) is hyperfinite for \(n>2\). This question was addressed by Ho, Le, and Rossegger~\cite{HoLeRos},
who gave an alternative proof of Theorem~\ref{thm : BS} and answered our question affirmatively. 

Therefore, it is natural to ask if our methods can be used to analyze other solvable groups, and more generally:

\begin{question}
    What is the Borel complexity of \(E_\mathsf{lo}(G)\), for \(G\) abelian-by-abelian?
\end{question}

Regarding Thompson's group \(F\) and the complexity of \(E_\mathsf{lo}(F)\), very little is known beyond Theorem~\ref{thm : F}.
In particular, we ask the following question:
\begin{question}
\label{q : Thompson F}
Is $E_{\mathrm{lo}}(F)$ hyperfinite?
\end{question}

Question~\ref{q : Thompson F} is related to
two famous problems: whether Thompson's group \(F\) is amenable, and whether every countable Borel equivalence relation induced by the action of an amenable group is hyperfinite, a long-standing open question posed by Benjamin Weiss. A negative answer to Question~~\ref{q : Thompson F} would imply that the amenability of \(F\) and a positive answer to Weiss question are mutually exclusive.

\bibliographystyle{alpha}
\newcommand{\etalchar}[1]{$^{#1}$}

\end{document}